\documentclass[3p,10pt,a4paper,twoside,fleqn,sort&compress]{filomat}
\usepackage{amssymb,amsmath,latexsym}

\DeclareMathOperator{\RE}{Re} 
\newtheorem{theorem}{Theorem}[section]

\newtheorem{corollary}[theorem]{Corollary}

\newtheorem{lemma}[theorem]{Lemma}

\begin{document}

\title{Estimates for   Coefficients  of  Certain  Analytic Functions
\footnote{Dedicated to Prof.   Ciric.}}


\author[affil1]{V. Ravichandran}
\ead{vravi68@gmail.com; vravi@maths.du.ac.in}


\author[affil1] {Shelly Verma}
\ead{jmdsv.maths@gmail.com}


\address[affil1]{Department of Mathematics, University of Delhi, Delhi--110 007, India}

\newcommand{\AuthorNames}{ V. Ravichandran and S. Verma}


\newcommand{\FilMSC}{Primary 30C45; Secondary 30C50, 30C80}
\newcommand{\FilKeywords}{Univalent functions, starlike functions, convex functions,  meromorphic functions, coefficient bounds, inverse coefficient bounds}
\newcommand{\FilCommunicated}{}
\newcommand{\FilSupport}{The second  author is supported by a Senior Research Fellowship from  National Board for Higher Mathematics, Mumbai.}


\begin{abstract}
For $ -1 \leq B \leq 1$ and $A>B$, let $\mathcal{S}^*[A,B]$ denote  the class of generalized Janowski starlike functions consisting  of all normalized  analytic  functions  $f$ defined by the subordination $z f'(z)/f(z) \prec (1+ A z)/(1+ B z)$ $(|z|<1)$. For $-1 \leq B \leq 1<A$,  we  investigate  the inverse coefficient problem for functions in the class $\mathcal{S}^*[A,B]$ and its meromorphic counter part. Also,  for $ -1 \leq B \leq  1 < A $, the sharp  bounds for first five  coefficients for inverse functions  of   generalized  Janowski convex functions are determined. A simple and  precise proof for inverse coefficient estimations for  generalized  Janowski convex functions  is provided for the case $A= 2 \beta -1$ $(\beta >1)$ and $B=1$.  As an application, for $F:=f^{-1}$, $A= 2 \beta -1$ $(\beta >1)$ and  $B=1$, the sharp coefficient bounds of $F/F'$ are obtained when  $f$ is a  generalized Janowski starlike or generalized Janowski convex function.  Further, we  provide  the sharp coefficient estimates for inverse functions  of normalized analytic  functions $f$ satisfying $f'(z) \prec (1+z)/(1+B z)$ $(|z|<1,  -1 \leq B < 1)$.
\end{abstract}

\maketitle

\section{Introduction and Preliminaries}

Let $\mathbb{D}$ denote the unit disc. Let $\mathcal{A}$ be the class of all normalized analytic  functions $f : \mathbb{D} \to \mathbb{C}$ of the form $f(z)= z+ a_2 z^2 + a_3 z^3 + \cdots$. The subclass of $\mathcal{A}$ consisting of univalent functions is denoted by $\mathcal{S}$.
An analytic function $f$ is said to be subordinate to an analytic function $g$, written  $ f \prec g$, if $f=g \circ w$ for some analytic function $w:\mathbb{D}\to \mathbb{D}$  with $w(0)=0$. If $g$ is univalent, then $f \prec g$ is equivalent to $f(0)=g(0)$ and  $f(\mathbb{D}) \subset g(\mathbb{D})$.  Let  $\varphi$ be an analytic univalent function with positive real part  mapping $\mathbb{D}$ onto domains symmetric with respect to real axis and starlike with respect to $\varphi(0)= 1$ and $\varphi'(0) >0$. Let $\mathcal{P}(\varphi)$ denote the class of all analytic functions $p: \mathbb{D} \to \mathbb{C}$  such that $p \prec \varphi$.  For such $\varphi$,   Ma and Minda \cite{MR1343506}  introduced the subclasses  $\mathcal{S}^*{(\varphi)}$ $(\mathcal{K}(\varphi))$  of $\mathcal{S}$   consisting of functions $f \in \mathcal{S}$ such that  $z f'(z)/ f(z)$ $ (1 + z f''(z)/f'(z))\in \mathcal{P}(\varphi).$
For different choices of $\varphi$, several well-known classes can be  easily  obtained  from these classes which were earlier considered and studied one by one  for their  geometric and  analytic properties. For instance,  $\mathcal{S}^*{((1+z)/(1-z))}=:\mathcal{S}^*$ and  $\mathcal{K}((1+z)/(1-z))=:\mathcal{K}$, the usual classes of starlike and convex functions respectively;  for $0 \leq \alpha <1$, $\mathcal{S}^*{((1+(1-2 \alpha)z)/(1-z))}=:\mathcal{S}^*(\alpha)$ and  $\mathcal{K}((1+(1-2 \alpha)z)/(1-z))=:\mathcal{K}(\alpha)$, the well-known classes of starlike and convex  functions of order $\alpha$,  respectively introduced in \cite{MR1503286}; for $0 < \alpha \leq 1$,   $\mathcal{S}^*{(((1+z)/(1-z))^{\alpha})}=:\mathcal{SS}^*(\alpha)$ is   the well-known class of  strongly starlike functions of order $\alpha$ introduced in  \cite{MR0251208}.
  In \cite{MR1343506}, the authors gave  a unified treatment to the geometric as well as analytic properties of these well-known classes.

We observe that the distortion theorem,  upper bound of $|f|$, rotation theorem, upper bound of Feketo-Szeg\"{o} coefficient functional $|a_3 - \mu a_2^2|$ for  $f \in \mathcal{K}(\varphi)$ given in \cite{MR1343506}  still hold  for a  normalized locally univalent function $f$   satisfying $1+ z f''(z)/f'(z) \prec \varphi(z)$ if we drop the condition that $\varphi$ has positive real part. Consequently,  the growth theorem and upper bound of Feketo-Szeg\"{o} coefficient functional $|a_3 - \mu a_2^2|$ follow for a  normalized analytic  function $f$  satisfying $ zf'(z)/f(z) \prec \varphi(z)$ even if $\varphi$  does not have  positive real part.  This motivates one  to consider the following subclasses  of $\mathcal{A}$,  for $-1 \leq B \leq  1$, $A>B$,
 \begin{align*}
\mathcal{K}[A,B] =  \left\lbrace f \in \mathcal{A}:1 + \frac{z f''(z)}{f'(z)} \in \mathcal{P}[A,B] \right\rbrace
\quad  \text{and } \quad \mathcal{S}^*[A,B]=  \left\lbrace f  \in \mathcal{A}: \frac{ z f'(z)}{f(z) } \in \mathcal{P}[A,B]  \right\rbrace
\end{align*}
  where $ \mathcal{P}[A,B]:= \mathcal{P}((1+ Az)/(1+ Bz))$.  For  $-1 \leq B<A \leq 1$, $\mathcal{S}^*[A,B]$ is a  subclass of $\mathcal{S}^*$ introduced by Janowski \cite{MR0328059} and   for  particular values of $A$ and $B$, it  reduces to  several known subclasses of $\mathcal{S}^*$. Precisely, $\mathcal{S}^*[1-2 \alpha,-1]=: \mathcal{S}^*(\alpha)$ $(0 \leq \alpha <1)$\cite{MR1503286}; \/  $\mathcal{S}^*[1,1/M-1]=: \mathcal{S}^*(M)$ $(M >1/2)$\cite{MR0267103}; \/ $\mathcal{S}^*[\beta,-\beta]=: \mathcal{S}^{*^{(\beta)}}$ $(0 < \beta \leq 1 )$ \cite{MR0241626}; \/  $\mathcal{S}^*[1- \beta,0]=: \mathcal{S}^{*}_{1-\beta}$  $( 0 \leq \beta < 1)$ \cite{MR509608}.  Note that,  for  $-1 \leq B  \leq  1 <  A$, the  functions in the classes  $\mathcal{K}[A,B]$ and  $\mathcal{S}^*[A,B]$ may not be univalent but  must be  locally univalent  in $\mathbb{D}$  and non-vanishing in  $\mathbb{D}\setminus \{0\}$, respectively.


Recently, the classes $\mathcal{S}^*[2 \beta-1,1]$ and  $ \mathcal{K}[2 \beta-1,1]$ $(\beta >1)$ have been studied by several authors, see \cite{MR1896244, MR1917801, MR1304483}. Moreover, the upper bound of the Feketo-Szeg\"{o} coefficient functional $|a_3 - \mu a_2^2|$   for    $ f \in  \mathcal{K}[2 \beta-1,1]$ or $f \in  \mathcal{S}^*[2 \beta-1,1]$;  the  distortion theorem, upper bound of $|f|$,  rotation theorem for    $ f \in  \mathcal{K}[2 \beta-1,1]$; and the growth theorem for  $ f \in  \mathcal{S}^*[2 \beta-1,1]$   are given  in  \cite{MR3436767}  which  can  actually  be deduced, even for the  functions in the generalized classes  $\mathcal{S}^*[A,B]$ and  $\mathcal{K}[A,B]$ $(-1 \leq B \leq 1<A)$,  from  the  results in \cite{MR1343506}.
 Also, for $-1 \leq B \leq  1$ and $A>B$,  one can  consider   the meromorphic counter part of  $\mathcal{S}^*[A,B]$, namely, the   class  $\Sigma^*[A,B]$ consisting of  analytic  functions  of the form
\begin{align}
g(z)= z + b_0 +  \dfrac{b_1}{z} + \dfrac{b_2}{z^2} + \cdots\label{p3eq0.01}
\end{align}
defined on $\mathbb{C} \setminus \overline{\mathbb{D}}$ such that  $ z g'(z)/g(z) \in p_0(\mathbb{D})$ where $p_0: \mathbb{D} \to \mathbb{C}$ is defined by $p_0(z)=(1+Az)/(1+Bz)$. For $-1 \leq B < A \leq 1$, the class $\Sigma^*[A,B]$  has been considered in \cite{MR766797}  and the  particular choices of $A$ and $B$ give the meromorphic counter parts of the classes corresponding to those of $S^*[A,B]$  such as   $\Sigma^*[1-2 \alpha,-1]=: \Sigma^*(\alpha)$ $(0 \leq \alpha <1)$ \cite{MR0150279}; \/  $\Sigma^*[1,1/M-1]=: \Sigma^*(M)$ $(M >1/2)$\cite{MR0286994}; \/ $\Sigma^*[\beta,-\beta]=: \Sigma^{*^{(\beta)}}$ $(0 < \beta \leq 1 )$\cite{MR0241626}; \/  $\Sigma^*[1- \beta,0]=: \Sigma^{*}_{1-\beta}$  $( 0 \leq \beta < 1)$.
Hallenbeck \cite{MR0338338} introduced the class $\mathfrak{I}$ consisting of functions $f \in \mathcal{S}$ such that $f' \in \mathcal{P}$, where $\mathcal{P}:= \mathcal{P}\big((1+z)/(1-z)\big)$. Further, Libera  and Z{\l}otkiewicz \cite{MR681830, MR749890} investigated the inverse coefficient problem of functions in the class $\mathfrak{I}$. For $-1 \leq B < A \leq 1$, let $\mathfrak{I}[A,B]$ denote the subclass of $\mathcal{S}$ consisting of functions $f \in \mathcal{S}$ such that $f' \in \mathcal{P}[A,B]$.

The problem of estimating the coefficients of inverse functions lay its origin in 1923 when L\"{o}wner \cite{MR1512136} gave   the sharp coefficient estimates for inverse  function  of  $f \in \mathcal{S}$ along with the sharp coefficient estimation for the third coefficient of $f \in \mathcal{S}$. Later, several authors \cite{MR0188428, MR589658, MR0335777, MR0011721}  gave alternate proofs for the inverse coefficient problem for functions in the class $\mathcal{S}$  but the inverse coefficient problem is still an open problem even  for  the  well-known classes   $\mathcal{K}$ and $\mathcal{S}^*(\alpha)$ $(0 \leq \alpha <1)$, although the  sharp estimates for  initial inverse coefficients are known for these classes, for details see \cite{MR689590, MR652447, MR2296897}. This leads to  several works  related to the inverse coefficient problem for functions in certain subclasses of $\mathcal{S}$, see \cite{MR2257293, MR2868315, MR813267, MR737480, MR1140278,  MR763927, MR1040905,MR2055766}. Recently, the inverse coefficient problem is  completely settled in \cite{MR3436767} for functions  in the classes $\mathcal{S}^*[2 \beta-1,1]$ or $\Sigma^*[2 \beta-1,1]$ or  $\mathcal{K}[2 \beta-1,1]$, $\beta >1$.

  In this paper, we  are mainly concerned about the determination of the sharp  inverse coefficient bounds for functions in  the classes $\mathcal{S}^*[A,B]$  or $\Sigma^*[A,B]$ $(-1 \leq B \leq 1<A)$. Also, we are giving  the  sharp  coefficient bounds for the inverse functions of functions in the class $\mathfrak{I}[1,B]$ $(-1 \leq B <1)$ and the  sharp  first five  coefficient bounds for the inverse functions of functions in the class $\mathcal{K}[A,B]$ for $ -1 \leq B \leq  1 < A $.  Apart from this,  we  present a slightly simpler proof than  the proof given in \cite{MR3436767}  for the sharp inverse coefficient estimation for  functions  in the class $\mathcal{K}[2 \beta-1,1]$ $(\beta >1)$. As an application, for $F:=f^{-1}$ and $\beta >1$, the sharp coefficient bounds of $F/F'$ are obtained when  $f \in \mathcal{S}^*[2 \beta-1,1]$ or $f \in \mathcal{K}[2 \beta-1,1]$.
 Further,   under some conditions,   the sharp coefficient estimates are determined  for functions in the class $\Sigma^*[A,B]$ $(-1 \leq B \leq 1< A)$.

  We need the following lemmas to prove our results.
  \begin{lemma}\cite[Theorem II, p.\ 547]{MR0059359}\label{p3lem1}  Let $\Omega$ be the family of functions $f$ such that for $|z| < \rho$ with $\rho>0$ , $f(z)= \sum_{n=1}^{\infty} a_n z^n$ $(a_1 \neq 0)$. If $ f \in \Omega$  and $\phi$ is the inverse function of $f$, then $\phi \in \Omega$. For any integer $t$, let  $f(z)^t = \sum_{n=-\infty}^{\infty} a_n^{(t)} z^n$ and $\phi(w)^t= \sum_{n=-\infty}^{\infty} b_n^{(t)} w^n$ in some neighbourhoods of the origin, where $a_n^{(t)}$ and $b_n^{(t)}$ are zero for $n < t$. Then
  $$ b_n^{(t)}= \dfrac{t}{n} a_{-t}^{(-n)}, \quad n \neq 0.$$
  For $n=0$, $b_0^{(t)}$ is defined by $$\sum_{t=-\infty}^{\infty} b_0^{(t)} z^{-t-1} = \dfrac{f'(z)}{f(z)}.$$
  \end{lemma}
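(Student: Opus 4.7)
The plan is to derive the identity by residue calculus, exploiting the fact that $\phi=f^{-1}$ is a genuine holomorphic inverse on a small disc about the origin. First I would write, for a small circle $\Gamma_\epsilon=\{|w|=\epsilon\}$ chosen inside the disc of convergence of $\phi$,
\begin{equation*}
b_n^{(t)} \;=\; \frac{1}{2\pi i}\oint_{\Gamma_\epsilon}\frac{\phi(w)^t}{w^{n+1}}\,dw.
\end{equation*}
Since $f$ is univalent on a small disc about $0$ (as $a_1\neq 0$), the substitution $w=f(z)$, $dw=f'(z)\,dz$ transforms $\Gamma_\epsilon$ into a simple closed curve $C=\phi(\Gamma_\epsilon)$ encircling the origin once, and produces
\begin{equation*}
b_n^{(t)} \;=\; \frac{1}{2\pi i}\oint_{C}\frac{z^{t}f'(z)}{f(z)^{n+1}}\,dz.
\end{equation*}

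The next step is the standard ``exact derivative'' manipulation that underlies the Lagrange--B\"urmann formula. Provided $n\neq 0$, one has the identity
\begin{equation*}
\frac{d}{dz}\!\left[\frac{z^{t}}{f(z)^n}\right]
\;=\;\frac{t\,z^{t-1}}{f(z)^n}\;-\;\frac{n\,z^{t}f'(z)}{f(z)^{n+1}},
\end{equation*}
so the integrand above equals $(t/n)\,z^{t-1}/f(z)^n$ plus $-(1/n)$ times an exact derivative of a single-valued meromorphic function on a punctured neighbourhood of $0$. The exact-derivative term contributes nothing to the contour integral around $C$, leaving
\begin{equation*}
b_n^{(t)} \;=\; \frac{t}{n}\cdot\frac{1}{2\pi i}\oint_{C}\frac{z^{t-1}}{f(z)^n}\,dz.
\end{equation*}
Expanding $f(z)^{-n}=\sum_{k}a_k^{(-n)}z^{k}$ (with $a_k^{(-n)}=0$ for $k<-n$, as noted in the statement), the residue at $0$ of $z^{t-1}f(z)^{-n}$ is the coefficient of $z^{-t}$ in $f(z)^{-n}$, namely $a_{-t}^{(-n)}$. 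This yields $b_n^{(t)}=(t/n)\,a_{-t}^{(-n)}$.

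For $n=0$ the integration-by-parts step is unavailable, so the case is handled directly: the same substitution turns the Cauchy integral for $b_0^{(t)}$ into
\begin{equation*}
b_0^{(t)} \;=\; \frac{1}{2\pi i}\oint_{C} z^{t}\,\frac{f'(z)}{f(z)}\,dz,
\end{equation*}
which is by definition the coefficient of $z^{-t-1}$ in the Laurent expansion of $f'(z)/f(z)$ about $0$. Summing these identities over $t\in\mathbb{Z}$ reproduces the generating-function definition given in the statement.

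The main delicate point, and the step I would write out most carefully, is the geometric justification of the change of variables: one must verify that for $\epsilon$ small enough $f$ is a biholomorphism of a neighbourhood of $C$ onto a neighbourhood of $\Gamma_\epsilon$, so that $w=f(z)$ parametrises $\Gamma_\epsilon$ exactly once as $z$ traverses $C$, and that the Laurent expansions of $\phi^t$ and $f^{-n}$ actually converge on the relevant annuli. Everything else is algebraic manipulation of a single residue.
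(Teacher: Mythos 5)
The paper does not prove this lemma at all: it is quoted verbatim from Jabotinsky's 1953 paper (the citation \cite[Theorem II, p.~547]{MR0059359}) and used as a black box, so there is no in-paper proof to compare against. Your residue-calculus derivation is correct and is the standard modern route to this Lagrange--B\"urmann-type identity (Jabotinsky's own argument in the cited source is organized around matrix representations of composition, so your proof is genuinely more direct). The key steps all check out: the Cauchy formula for $b_n^{(t)}$ is legitimate because $\phi(w)^t=w^t\cdot(\text{unit})$ is single-valued with a genuine Laurent expansion on a punctured disc; the substitution $w=f(z)$ carries $\Gamma_\epsilon$ to a positively oriented simple loop about $0$ since $\phi$ is an orientation-preserving biholomorphism fixing $0$; the function $z^t/f(z)^n=z^{t-n}\cdot(\text{unit})$ is single-valued meromorphic near $0$, so the exact-derivative term really does integrate to zero; and picking off the coefficient of $z^{-t}$ in $f(z)^{-n}$ gives $a_{-t}^{(-n)}$ exactly as you say. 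Your $n=0$ computation correctly identifies $b_0^{(t)}$ with the coefficient of $z^{-t-1}$ in $f'(z)/f(z)$, which is precisely the generating-function description in the statement. The one point you flag as delicate -- that $f$ is a biholomorphism of a neighbourhood of $C$ onto a neighbourhood of $\Gamma_\epsilon$ and that the relevant Laurent series converge on the annuli used -- is indeed the only thing that needs to be written out, and it follows from $a_1\neq 0$ via the inverse function theorem; with that paragraph added your proof is complete and self-contained, which is more than the paper itself offers for this statement.
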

  \begin{lemma}\cite[Theorem X, p.\ 70]{MR0008625}\label{p3lem2}
  Let $f(z) = 1 + \sum_{n=1}^{\infty}a_n z^n$ and $g(z) =1+ \sum_{n=1}^{\infty}b_n z^n$ $(z \in \mathbb{D})$ be such that  $f \prec g$. If $g$ is univalent in $\mathbb{D}$ and $g(\mathbb{D})$ is convex,  then $|a_n| \leq |b_1|$.
  \end{lemma}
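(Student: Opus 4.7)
My plan is to reduce the inequality to the generalized Schwarz lemma via a roots-of-unity averaging that exploits the convexity of $g(\mathbb{D})$. First, since $f(0) = g(0) = 1$, I would shift by the constant, setting $\tilde f = f - 1$ and $\tilde g = g - 1$: both vanish at $0$, $\tilde g$ remains univalent with image still convex (a translate of a convex set is convex), and $\tilde f \prec \tilde g$. It therefore suffices to prove the bound under the simplifying hypothesis $f(0) = g(0) = 0$, which I henceforth assume, and to write the subordination as $\tilde f = \tilde g \circ w$ for a Schwarz function $w : \mathbb{D} \to \mathbb{D}$ with $w(0) = 0$.

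The key step is the averaging identity. Set $\varepsilon = e^{2\pi i/n}$ and define
\[
\phi_n(z) := \frac{1}{n}\sum_{k=0}^{n-1} \tilde f(\varepsilon^k z) = a_n z^n + a_{2n} z^{2n} + \cdots,
\]
which extracts precisely the coefficients of $f$ at indices divisible by $n$. Substituting $\tilde f = \tilde g \circ w$, each summand $\tilde g(w(\varepsilon^k z))$ lies in the convex set $\tilde g(\mathbb{D})$, so its arithmetic mean $\phi_n(z)$ stays in $\tilde g(\mathbb{D})$ for every $z \in \mathbb{D}$. Univalence of $\tilde g$ then lets me define $\psi_n := \tilde g^{-1} \circ \phi_n : \mathbb{D} \to \mathbb{D}$, an analytic self-map of $\mathbb{D}$ with $\psi_n(0) = 0$.

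Finally, I would match Taylor expansions in $\phi_n(z) = b_1 \psi_n(z) + b_2 \psi_n(z)^2 + \cdots$ against $\phi_n(z) = a_n z^n + O(z^{n+1})$; since $b_1 \neq 0$, this forces $\psi_n$ to vanish to order at least $n$ at the origin with $\psi_n(z) = (a_n/b_1) z^n + O(z^{n+1})$. Applying the generalized Schwarz lemma to $\psi_n$---equivalently, the maximum principle to the bounded analytic function $\psi_n(z)/z^n$ on $\mathbb{D}$---yields $|a_n/b_1| \le 1$, the desired estimate. The main conceptual hurdle is the convex-combination step ensuring $\phi_n(\mathbb{D}) \subset \tilde g(\mathbb{D})$; this is the unique point where the convexity hypothesis enters, and every other step is a routine application of Schwarz's lemma and the univalence of $\tilde g$.
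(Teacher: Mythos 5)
The paper does not prove this lemma at all: it is quoted verbatim from Rogosinski \cite[Theorem X]{MR0008625}, so there is no in-paper argument to compare yours against. Your proof is correct and is in fact the classical argument for this theorem: the root-of-unity average $\phi_n(z)=\frac{1}{n}\sum_{k=0}^{n-1}\tilde f(\varepsilon^k z)=a_nz^n+a_{2n}z^{2n}+\cdots$ stays in the convex set $\tilde g(\mathbb{D})$, hence $\phi_n\prec\tilde g$, and since $b_1=g'(0)\neq 0$ by univalence, the Schwarz function $\psi_n=\tilde g^{-1}\circ\phi_n$ vanishes to order at least $n$ with $n$-th coefficient $a_n/b_1$, so the generalized Schwarz lemma gives $|a_n|\leq|b_1|$. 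The only point worth making explicit is the degenerate case $\phi_n\equiv 0$, where $a_n=0$ and the bound is trivial; otherwise every step is airtight.
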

  By using the  above lemma, the following result is proved. This has been proved  in \cite{MR907789} for the case  $-1 \leq B < A \leq 1$.
  \begin{lemma}\label{p3lem3}
  If $p(z)= 1+ \sum_{k=1}^{\infty} c_k z^k$ is in $\mathcal{P}[A,B]$ $(-1 \leq B \leq 1, A>B)$ then $|c_n| \leq A-B$. The bounds are sharp.
  \end{lemma}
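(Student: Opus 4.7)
The statement falls squarely into the setup of Lemma \ref{p3lem2}, so my plan is to verify that the extremal function $q(z) := (1+Az)/(1+Bz)$ is univalent with convex image in $\mathbb{D}$, extract the coefficient $b_1 = A-B$ from its Taylor expansion, and then read off the bound from Lemma \ref{p3lem2}. The subordination $p \prec q$ is the very definition of $\mathcal{P}[A,B]$, so the hypotheses of Lemma \ref{p3lem2} (with $g=q$) are almost free; the only genuine work is checking that $q(\mathbb{D})$ is a convex domain in the regime $A>1$, which is the new case compared with \cite{MR907789}.

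First I would compute
\[
q(z) = \frac{1+Az}{1+Bz} = 1 + (A-B)z - B(A-B)z^2 + B^2(A-B)z^3 - \cdots,
\]
so $b_1 = A - B$. Univalence of $q$ on $\mathbb{D}$ is immediate from $q(z_1)=q(z_2) \Longleftrightarrow (A-B)(z_1-z_2)=0$ together with $A>B$. For convexity of $q(\mathbb{D})$, I would split into cases: if $B=0$, then $q(\mathbb{D})$ is the disk $|w-1|<A$; if $0<|B|<1$, then $q$ is a M\"obius map with its pole $-1/B$ lying outside $\overline{\mathbb{D}}$, so $q(\mathbb{D})$ is a disk; and if $|B|=1$, then the pole sits on $\partial\mathbb{D}$ and a short computation (writing $q(z)=A/B+(1-A/B)/(1+Bz)$) shows $q(\mathbb{D})$ is a half-plane. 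In every case $q(\mathbb{D})$ is convex, so Lemma \ref{p3lem2} applies and gives $|c_n|\le |b_1|=A-B$.

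For sharpness I would take, for each $n\ge 1$, the function
\[
p_n(z) = q(z^n) = \frac{1+Az^n}{1+Bz^n} = 1 + (A-B)z^n - B(A-B)z^{2n} + \cdots,
\]
which lies in $\mathcal{P}[A,B]$ via the Schwarz function $w(z)=z^n$ and realizes $|c_n|=A-B$.

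The only place where I expect any care to be needed is the convexity verification in the endpoint cases $B=\pm 1$ (where $q(\mathbb{D})$ degenerates from a disk into a half-plane); everything else is a direct application of Lemma \ref{p3lem2}.
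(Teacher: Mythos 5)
Your proposal is correct and follows essentially the same route as the paper: both verify that $g(z)=(1+Az)/(1+Bz)$ is univalent with convex image (disc for $|B|<1$, half-plane for $B=\pm 1$), apply Lemma \ref{p3lem2} to get $|c_n|\le|b_1|=A-B$, and establish sharpness with $p_n(z)=(1+Az^n)/(1+Bz^n)$. The only cosmetic difference is that the paper writes out the image disc and half-planes explicitly, whereas you argue via the location of the pole $-1/B$.
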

  \begin{proof}
 Since $p \in \mathcal{P}[A,B]$,  $p(z) \prec (1+ Az)/(1+Bz)$. Let $g(z):=(1+A z)/(1+ B z)$. Clearly, $g$ is univalent in $\mathbb{D}$. For $-1 <B <1$, $g(\mathbb{D})$ is the disc $|w-(1-AB)/(1-B^2)|<(A-B)/(1-B^2)$. For $B=1$ and $B=-1$,  $g(\mathbb{D})$ is the left half plane $\RE(w)<(1+A)/2$ and the right half plane $\RE(w)>(1-A)/2$ respectively. Therefore, $g(\mathbb{D})$ is convex and hence by Lemma \ref{p3lem2},   $| c_n| \leq  A-B$ for each $n$. Define  a function $p_n:\mathbb{D} \to \mathbb{C}$ as $$p_n(z)=\dfrac{1+ A z^n}{1+ B z^n}= 1 + (A-B) z^n - B(A-B) z^{2n} + \cdots.$$Clearly, the result is sharp for the function $p_n$.
  \end{proof}
  The following lemma follows easily by induction on $m$ and for  $-1 \leq B < A \leq 1$, it is given in \cite[Lemma 2,  p.\ 737]{MR907789}.
  \begin{lemma}\label{p3lem4}
  Let $A>B$, $-1 \leq B \leq 1$. Then for any integer $t$ and   $m  \in \mathbb{N}$, we have
 $$ m^2 \prod_{j=0}^{m-1} \left(\frac{(A-B) t + B j}{j+1} \right)^2  = (A-B)^2 t^2 + \sum_{k=1}^{m-1} \Big(\big( (A-B) t + B k\big)^2-k^2 \Big)  \prod_{j=0}^{k-1} \left(\frac{(A-B) t + B j}{j+1} \right)^2.
 $$
\end{lemma}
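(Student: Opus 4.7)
The proof is a direct induction on $m$, which essentially telescopes once the right abbreviation is introduced. My plan is to set
\[
P_k := \prod_{j=0}^{k-1}\left(\frac{(A-B)t+Bj}{j+1}\right)^2 \qquad (k \geq 1),
\]
with the convention $P_0 = 1$, so that the identity to be proved reads
\[
m^2 P_m = (A-B)^2 t^2 + \sum_{k=1}^{m-1}\Big(((A-B)t+Bk)^2 - k^2\Big) P_k.
\]
From the definition of $P_k$ one reads off the one-step recursion $(k+1)^2 P_{k+1} = ((A-B)t+Bk)^2 P_k$, and this will be the sole algebraic input of the argument.

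The base case $m = 1$ is immediate, because the sum on the right is empty and $P_1 = ((A-B)t)^2$, so both sides equal $(A-B)^2 t^2$. For the inductive step I would split off the $k = m$ term from the right-hand side at level $m+1$; the remaining sum is exactly the right-hand side at level $m$, which by the inductive hypothesis equals $m^2 P_m$. Combining, the right-hand side at level $m+1$ becomes
\[
\big(m^2 + ((A-B)t + Bm)^2 - m^2\big) P_m = ((A-B)t + Bm)^2 P_m,
\]
which by the one-step recursion is precisely $(m+1)^2 P_{m+1}$, closing the induction.

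I do not foresee any genuine obstacle here: the statement is a telescoping identity in disguise, and the authors' remark that it \emph{follows easily by induction on} $m$ is accurate. The only point worth noting is that $t$ is allowed to be any integer, possibly negative or zero, since it will later play the role of an exponent in Lemma~\ref{p3lem1}; however, both sides of the identity are polynomials in $t$ with coefficients depending only on $A$ and $B$, so the argument requires no case distinction on the sign or vanishing of $t$ and the induction goes through uniformly.
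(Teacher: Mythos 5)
Your proof is correct and follows exactly the route the paper indicates: the paper gives no written proof, stating only that the identity ``follows easily by induction on $m$,'' and your telescoping induction with the one-step recursion $(k+1)^2 P_{k+1} = \big((A-B)t+Bk\big)^2 P_k$ is precisely that argument, carried out correctly in the base case and the inductive step.
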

\section{Main Results}
The following theorem gives estimates for inverse coefficient of functions in the class $\mathcal{S}^*[A,B]$  $(  -1 \leq B  \leq 1<A)$.
\begin{theorem}\label{p3thm1}
Let $f \in \mathcal{S}^*[A,B]$  $( -1 \leq B  \leq 1 <A)$ and  $f^{-1}(w)=:F(w)= w + \sum_{n=2}^\infty \gamma_{n} w^n$ in some neighbourhood of the origin.  Then  for each $n \geq 2$,  \begin{align} |\gamma_{n}| \leq  \dfrac{1}{n}\prod_{m=0}^{n-2} \left(\dfrac{ n(A -B) + m B}{m+1} \right). \label{p3eq2}
\end{align}
The result is sharp.
\end{theorem}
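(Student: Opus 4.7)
The plan is to convert the inverse coefficient problem into a Taylor coefficient estimate for the auxiliary function $q(z):=(f(z)/z)^{-n}$ and then exploit the recurrence encoded by the defining subordination. First, by Lemma~\ref{p3lem1} applied with $t=1$, $\gamma_n = a_{-1}^{(-n)}/n$, so $\gamma_n$ equals $1/n$ times the coefficient of $z^{-1}$ in $f(z)^{-n}$. Since any $f\in\mathcal{S}^*[A,B]$ is non-vanishing on $\mathbb{D}\setminus\{0\}$, $q$ is analytic in $\mathbb{D}$ with expansion $q(z)=\sum_{k\ge 0}h_k z^k$, $h_0=1$, and matching Laurent coefficients of $f(z)^{-n}=z^{-n}q(z)$ identifies $a_{-1}^{(-n)}=h_{n-1}$. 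Thus the task reduces to proving $|h_{n-1}|\le P_{n-1}$, where $P_m:=\prod_{j=0}^{m-1}(n(A-B)+jB)/(j+1)$, which upon division by $n$ yields \eqref{p3eq2}.

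Next, I would translate $zf'/f=p\in\mathcal{P}[A,B]$ via logarithmic differentiation of $q$ into the functional equation $zq'(z)=-n(p(z)-1)q(z)$. Writing $p(z)=1+\sum_{k\ge 1}c_k z^k$ and comparing Taylor coefficients gives the recurrence
\[
k\,h_k = -n\sum_{j=1}^{k}c_j\,h_{k-j},\qquad k\ge 1,
\]
and Lemma~\ref{p3lem3} supplies $|c_j|\le A-B$. I then prove by induction on $k$ that $|h_k|\le P_k$; Lemma~\ref{p3lem4}, specialised at $t=n$, enters as the crucial ingredient via the telescoping identity
\[
m^2 P_m^2 = n^2(A-B)^2 + \sum_{k=1}^{m-1}\bigl[(n(A-B)+kB)^2-k^2\bigr]P_k^2,
\]
which is exactly the relation needed to reconcile the quadratic bound coming from the recurrence with the desired sharp constant $P_m$. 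A clean way to organise the induction is to substitute the Schwarz parametrisation $p=(1+Aw)/(1+Bw)$, rewrite the recurrence as $k\,h_k=-\sum_{i=0}^{k-1}[n(A-B)+iB]\,w_{k-i}\,h_i$, and apply Cauchy--Schwarz together with $\sum_{j\ge 1}|w_j|^2\le 1$ (from Schwarz's lemma). Setting $k=n-1$ completes the estimate.

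The hard part will be calibrating the inductive step so that Lemma~\ref{p3lem4} exactly absorbs the slack in the elementary coefficient estimate: a naive induction using only $|c_j|\le A-B$ and $|h_i|\le P_i$ produces a bound of the form $\prod_{m=0}^{k-1}(n(A-B)+m)/(m+1)$, which is strictly larger than $P_k$ as soon as $B<1$, so one really must exploit the finer structure of the Schwarz function $w$ that Lemma~\ref{p3lem4} encodes. Sharpness of \eqref{p3eq2} is then verified using the extremal function $f_*(z)=z(1+Bz)^{(A-B)/B}$ for $B\neq 0$ (respectively $f_*(z)=z\exp(Az)$ for $B=0$): a direct binomial expansion of $q_*(z)=(1+Bz)^{-n(A-B)/B}$ shows that its $(n-1)$-st Taylor coefficient has modulus exactly $P_{n-1}$, so equality is attained.
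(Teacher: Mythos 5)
Your overall architecture matches the paper's: reduce $\gamma_n$ to the $(n-1)$-st Taylor coefficient $h_{n-1}$ of $(f(z)/z)^{-n}$, turn the subordination into a coefficient recurrence driven by the Schwarz function, close an induction with Lemma~\ref{p3lem4} at $t=n$, and exhibit sharpness with $z(1+Bz)^{(A-B)/B}$. All of that is right, including your observation that the naive induction overshoots. The gap is in the one step you leave to a named tool: obtaining a quadratic inequality that contains the correction terms $-i^2$. Cauchy--Schwarz applied to the single identity $k\,h_k=-\sum_{i=0}^{k-1}\bigl[n(A-B)+iB\bigr]w_{k-i}h_i$ together with $\sum_{j}|w_j|^2\le 1$ yields only
\[
k^2|h_k|^2\le \sum_{i=0}^{k-1}\bigl(n(A-B)+iB\bigr)^2|h_i|^2 ,
\]
and inserting the inductive hypothesis $|h_i|\le P_i$ and then the identity of Lemma~\ref{p3lem4} turns the right-hand side into $k^2P_k^2+\sum_{i=1}^{k-1}i^2P_i^2$, which is strictly larger than $k^2P_k^2$ for every $k\ge 2$ (since $P_1=n(A-B)>0$). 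So the induction does not close; the slack you flag is real, and the tool you name does not absorb it.

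What is needed is Clunie's method, which is what the paper uses. Truncate the functional identity $\sum_{k\ge1}k h_k z^k=-w(z)\sum_{i\ge0}\bigl[n(A-B)+iB\bigr]h_i z^i$ at order $s$ (the coefficients of $z^k$ with $k\le s$ on the left involve only the terms $i\le s-1$ inside the bracket on the right), use $|w(z)|<1$ to compare the squared moduli of the two sides, integrate over $|z|=r$, apply Parseval's identity to both sides and let $r\to1$. Because all the terms $k^2|h_k|^2$ with $k\le s$ are retained on the left before isolating $k=s$, one obtains
\[
s^2|h_s|^2\le n^2(A-B)^2+\sum_{i=1}^{s-1}\Bigl[\bigl(n(A-B)+iB\bigr)^2-i^2\Bigr]|h_i|^2 ,
\]
whose bracketed coefficients are nonnegative for $i\le n-2$ precisely because $A>1$ (this is where that hypothesis enters), and this is exactly the inequality that Lemma~\ref{p3lem4} telescopes to $s^2P_s^2$. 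With this replacement your argument, including the reduction $\gamma_n=h_{n-1}/n$ via Lemma~\ref{p3lem1} and the binomial verification of sharpness, coincides with the paper's proof.
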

\begin{proof}
For  any integer $t>0$, let $$g(z):= \left(\dfrac{f(z)}{z}\right)^{-t}= 1 + \sum_{j=1}^{\infty} a_j^{(-t)} z^j \quad (|z|<1).$$ Then
\begin{align}
 -\dfrac{z}{t}\dfrac{g'(z)}{g(z)}= \dfrac{z f'(z)}{f(z)}-1.\label{p3eq5}
  \end{align}
  Since $f \in \mathcal{S}^*[A,B]$, we have
 \begin{align}
 \dfrac{z f'(z)}{f(z)}= \dfrac{1+ A w(z)}{1+ B w(z)}\label{p3eq6}
 \end{align}
 for some analytic function  $w: \mathbb{D} \to \mathbb{D}$ with $w(0)=0$. The equations \eqref{p3eq5} and \eqref{p3eq6} give  $$ \sum_{j=1}^\infty  j a_{j}^{(-t)} z^{j} = - w(z) \left( (A-B) t + \sum_{j=1}^\infty \big(B(j-t) + A t \big)a_{ j}^{(-t)} z^{j}\right) $$  which can be rewritten as
 $$  \sum_{j=1}^s  j a_{j}^{(-t)} z^{j}  + \sum_{j=s+1}^\infty b_{ j}^{(-t)} z^{j}= -w(z) \left( (A-B) t+ \sum_{j=1}^{s-1} \big(B(j-t) +  A t \big)a_{ j}^{(-t)} z^{j}\right)$$
  where   $$ \sum_{j=s+1}^\infty b_{j}^{(-t)} z^{j}:= \sum_{j=s+1}^{\infty}  j a_{j}^{(-t)} z^{j} + w(z)\left(\sum_{j=s}^{\infty} \big(B(j-t) +  A t \big)a_{j}^{(-t)} z^{j}\right). $$
Since $|w(z)| <1$ $(|z| <1)$, squaring the moduli of both sides, we have $$ \left|\sum_{j=1}^s  j a_{j}^{(-t)} z^{j}  + \sum_{j=s+1}^\infty b_{j}^{(-t)} z^{j}\right|^2 < \left|  (A-B) t + \sum_{j=1}^{s-1} \big(B(j-t) +  A t \big)a_{j}^{(-t)} z^{j}\right|^2.  $$ Integrating   along $|z|=r$, $0<r<1$ with respect to  $\theta$ $(0 \leq \theta \leq 2 \pi)$ and applying Parseval's identity that for an analytic function $g: \mathbb{D} \to \mathbb{C}$ of the form $g(z)= \sum_{n=0}^{\infty} A_n z^n$,    $$ \frac{1}{2 \pi} \int_0^{2 \pi} |g(r e^{i \theta})|^2 \, d\theta = \sum_{n=0}^{\infty} |A_n|^2 r^{2n}  \quad (0 < r <1)$$  we have
$$ \sum_{j=1}^s  |j a_{j}^{(-t)}|^2 r^{2j}  + \sum_{j=s+1}^\infty| b_{j}^{(-t)}|^2 r^{2j} \leq  (A-B)^2 t^2 +  \sum_{j=1}^{s-1} |B(j-t) + A t|^2 |a_{j}^{(-t)}|^2 r^{2j}. $$
Letting $r \to 1$ yields
$$ \sum_{j=1}^s  |j a_{j}^{(-t)}|^2    \leq  (A-B)^2 t^2 + \sum_{j=1}^{s-1} |B(j-t) + A t|^2 |a_{j}^{(-t)}|^2$$
and therefore,
 \begin{align}
 |s a_{s}^{(-t)}|^2    \leq  (A-B)^2 t^2 + \sum_{j=1}^{s-1} \Big( \big((A-B) t + B j\big)^2 - j^2\Big)|a_{j}^{(-t)}|^2.\label{p3eq7}
\end{align}
We shall show that,   for $-1 \leq B \leq 1, \, A>B$, $t \geq ( s-1)(1-B)/(A-B)$ and $s \geq 1$,
\begin{align}
  | a_{s}^{(-t)}|  \leq  \prod_{m=0}^{s-1} \left( \dfrac{(A-B) t + m B }{m+1}\right).\label{p3eq8}
 \end{align} We proceed by induction on $s$.
 For  $s=1$, equation \eqref{p3eq7} gives
  $$  | a_{1}^{(-t)}|    \leq  (A-B)t.$$
 Since $-1 \leq B \leq1$ and $A>B$, for  fixed $j \geq 1$,  $\big((A-B) t + B j\big)^2 - j^2= \big((A-B)t-j(1-B)\big)\big((A-B) t + j(1+B)\big) \geq 0$ if $t \geq j(1-B)/(A-B)$. Assume that \eqref{p3eq8} holds for  $s \leq q-1$ and  $ t  \geq ( q-1)(1-B)/(A-B)$. Then by using induction hypothesis and the equation  \eqref{p3eq7} for $s=q$,    we have
$$ |q a_{ q}^{(-t)}|^2    \leq  (A-B)^2 t^2 + \sum_{j=1}^{q-1} \Big( \big((A-B) t + B j\big)^2 - j^2\Big) \prod_{m=0}^{j-1} \left( \dfrac{(A-B) t + m B }{m+1}\right)^2 $$
which by   using Lemma \ref{p3lem4}  gives
$$ | a_{q}^{(-t)}|   \leq    \prod_{m=0}^{q-1}  \left(\dfrac{(A-B) t + m B }{m+1}\right).$$
Thus,  \eqref{p3eq8} holds for  $s=q$ and hence by induction \eqref{p3eq8}  holds for all $s \geq 1$. By applying Cauchy's integral formula for $F'$, it can be easily seen that
\begin{align}
\gamma_{n}= \dfrac{1}{n} a_{n-1}^{(-n)}\quad  (n \geq 2).\label{p3eq8.1}
\end{align}
Since $A > 1$,  therefore $( n-2)(1-B)/(A-B) \leq  n-2$ $(n \geq 2)$. So, for $t=n$ and $s= n-1$,   the equation  \eqref{p3eq8} gives
 $$ |\gamma_{n}| = \dfrac{1}{n} |a_{n-1}^{(-n)}| \leq  \dfrac{1}{n} \prod_{m=0}^{n-2} \left( \dfrac{(A-B) n + m B }{m+1}\right). $$
Define a  function $f_{1}: \mathbb{D} \to \mathbb{C}$  by
\begin{align}
f_{1}(z)=\begin{cases}
z(1+ B z)^{(A-B)/ B}, &\text{$B \neq 0$}\\
 ze^{A z}, &\text{$B=0$.}
\end{cases}  \label{p3eq11.4}
\end{align}
The  result is sharp   for the function $f_1$.
\end{proof}
For  $A=2 \beta-1$, $B=1$ $(\beta >1)$, the above theorem reduces to  \cite[Theorem 4.3, \ p. 14]{MR3436767}.
\begin{corollary}
Let $f \in \mathcal{S}^*[2 \beta-1,1]$ $(\beta>1)$  and  $f^{-1}(w)=:F(w)= w + \sum_{n=2}^\infty \gamma_{n} w^n$ in some neighbourhood of the origin. If $F(w)/F'(w)= w + \sum_{n=2}^{\infty}\delta_n w^n$, then $|\delta_2| \leq 2(\beta-1)$ and  for $n>2$,
$$ |\delta_n| \leq 2(\beta-1) \prod_{j=2}^{n-1}\left(\dfrac{2(n-1) (\beta-1)+ j}{j}\right).$$
The result is sharp.
\end{corollary}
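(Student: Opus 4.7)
The plan is to first derive an explicit formula for $\delta_n$ in terms of the coefficients of $p(z) := zf'(z)/f(z)$ and the coefficients of $(f(z)/z)^{-t}$, then apply the bounds already available from Lemma \ref{p3lem3} and the proof of Theorem \ref{p3thm1}, and finally dispose of the resulting combinatorial identity.

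Starting from the observation that at $w = f(z)$ we have $F(w)/F'(w) = zf'(z) = f(z)\, p(z)$, I obtain the functional identity
$$\frac{F(w)}{F'(w)} = w \cdot p(F(w)).$$
Expanding $p(z) = 1 + \sum_{k \geq 1} c_k z^k$ and using Lagrange inversion (Lemma \ref{p3lem1}) to compute $[w^{n-1}] F(w)^k = (k/(n-1))\, a_{n-1-k}^{(-(n-1))}$ in the notation of the proof of Theorem \ref{p3thm1}, I arrive at
$$\delta_n = \frac{1}{n-1} \sum_{k=1}^{n-1} k\, c_k\, a_{n-1-k}^{(-(n-1))}, \qquad n \geq 2.$$
For $n = 2$ this collapses to $\delta_2 = c_1$, giving $|\delta_2| \leq A - B = 2(\beta-1)$ at once from Lemma \ref{p3lem3}.

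For $n \geq 3$, I would apply $|c_k| \leq 2(\beta-1)$ (Lemma \ref{p3lem3}) together with
$$|a_s^{(-(n-1))}| \leq \binom{\alpha + s - 1}{s}, \qquad \alpha := 2(n-1)(\beta-1),$$
which is the bound \eqref{p3eq8} of Theorem \ref{p3thm1} specialised to $B = 1$ (the hypothesis $t \geq (s-1)(1-B)/(A-B)$ becomes vacuous). The triangle inequality reduces the target estimate to the combinatorial identity
$$\sum_{s=0}^{n-2}(n-1-s)\binom{\alpha + s - 1}{s} = (n-1) \prod_{j=2}^{n-1} \frac{\alpha + j}{j}.$$
I would prove this by splitting $(n-1-s) = (n-1) - s$, using $s\binom{\alpha+s-1}{s} = \alpha\binom{\alpha+s-1}{s-1}$, and applying the hockey-stick identity $\sum_{l=0}^{L}\binom{r+l}{l} = \binom{r+L+1}{L}$ to both resulting sums; a short calculation then reduces everything to the Pascal-type ratio $\binom{\alpha+n-2}{n-3} = \frac{n-2}{\alpha+1}\binom{\alpha+n-2}{n-2}$ and yields the claim.

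For sharpness I would take the extremal function $f_1(z) = z(1+z)^{2(\beta-1)}$ from the proof of Theorem \ref{p3thm1}, for which $c_k = (-1)^{k-1}(A-B)$ and $a_s^{(-(n-1))} = (-1)^s \binom{\alpha + s - 1}{s}$. The product $c_k\, a_{n-1-k}^{(-(n-1))}$ then carries the fixed sign $(-1)^{n-2}$ independent of $k$, so all terms in the formula for $\delta_n$ align in sign and equality is attained throughout. The main obstacle is the combinatorial identity above; the remainder is routine manipulation of the estimates already supplied by Lemmas \ref{p3lem1}, \ref{p3lem3} and the proof of Theorem \ref{p3thm1}.
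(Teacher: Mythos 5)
Your proposal is correct, and it reaches the bound by a genuinely different route from the paper's, even though both start from the same functional equation $F(w)/F'(w)=w\,p(F(w))$ with $p=zf'/f$. The paper expands $p(F(w))$ and keeps the coefficients $\gamma_{n-1,j}$ of $F(w)^j$ as polynomials $S_j(\gamma_2,\dotsc,\gamma_{n-2})$ with non-negative coefficients; it bounds these through the conclusion $|\gamma_m|\le A_m$ of Theorem \ref{p3thm1} and, crucially, never evaluates the resulting sum in closed form: it runs the identical coefficient comparison on the extremal function $g_1(z)=z(1-z)^{2(\beta-1)}$, for which the same sum is exactly $B_n$, so the identification with the stated product comes for free. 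You instead push the computation one level down via Lemma \ref{p3lem1}: writing $[w^{n-1}]F(w)^k=\tfrac{k}{n-1}a^{(-(n-1))}_{n-1-k}$ lets you invoke the intermediate inequality \eqref{p3eq8} (whose hypothesis $t\ge (s-1)(1-B)/(A-B)$ is indeed vacuous for $B=1$) rather than the theorem's conclusion, and you must then close the argument with the explicit identity $\sum_{s=0}^{n-2}(n-1-s)\binom{\alpha+s-1}{s}=(n-1)\prod_{j=2}^{n-1}\tfrac{\alpha+j}{j}$, $\alpha=2(n-1)(\beta-1)$. This identity is true and your sketch of its proof goes through: splitting $(n-1-s)=(n-1)-s$, using $s\binom{\alpha+s-1}{s}=\alpha\binom{\alpha+s-1}{s-1}$ and two applications of the hockey-stick identity gives $\binom{\alpha+n-2}{n-2}\cdot\tfrac{\alpha+n-1}{\alpha+1}$, which equals the right-hand side. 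Your sharpness argument via the sign alignment $c_k\,a^{(-(n-1))}_{n-1-k}=(-1)^{n-2}\,2(\beta-1)\binom{\alpha+n-2-k}{n-1-k}$ for $f_1(z)=z(1+z)^{2(\beta-1)}$ is also sound and is equivalent to the paper's use of the rotation $g_1$. What your route buys is a fully explicit, self-contained evaluation of the bound (it in fact also verifies the product formula for $B_n$, which the paper asserts); what the paper's route buys is the avoidance of any binomial identity and a template it reuses verbatim for the convex case.
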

\begin{proof}
Since $f \in \mathcal{S}^*[2 \beta-1,1]$ $(\beta >1)$, $z f'(z)/f(z) \in \mathcal{P}[2 \beta-1,1]$. This gives  $$ \dfrac{z f'(z)}{f(z)}= p(z)$$
where $p(z)= 1 + c_1 z + c_2 z^2 + \cdots \in \mathcal{P}[2 \beta-1,1]$. In terms of $F:=f^{-1}$, the above equation becomes
$$ \dfrac{F(w)}{F'(w)}= w p(F(w)).$$
Using power series expansions of $F/F'$, $p$ and $F$, we obtain
\begin{align}
\sum_{n=2}^{\infty} \delta_n w^n = \sum_{n=2}^{\infty} \left( \sum_{j=1}^{n-1} c_j \gamma_{n-1,j} \right) w^n \label{p3eq11.3555}
\end{align}
 where $\gamma_{n-1,j}$ denotes the coefficient of $w^{n-1}$ in the expansion of $F(w)^j$. In fact, $\gamma_{n-1,j}= S_j(\gamma_{2}, \gamma_{3}, \dotsc, \gamma_{n-2})$ is a polynomial in $\gamma_{2}, \gamma_{3}, \dotsc, \gamma_{n-2}$ with non-negative coefficients and  $\gamma_{n-1,n-1}=1$. On comparing the coefficients of $w^n$, we have
 $$ \delta_n=  \sum_{j=1}^{n-1} c_j \gamma_{n-1,j}.$$
 An application of Lemma \ref{p3lem3}  gives
 \begin{align}
 |\delta_n| \leq  2(\beta-1) \sum_{j=1}^{n-1}  S_j(|\gamma_{2}|, |\gamma_{3}|, \dotsc, |\gamma_{n-2}|). \label{p3eq11.355}
 \end{align}
 Define  $g_1(z):= e^{-i \pi}f_1(e^{i \pi}z)$ where $f_1$ is given by \eqref{p3eq11.4} for $A=2 \beta-1$ and $B=1$. Clearly, $g_1 \in \mathcal{S}^*[2 \beta-1,1]$. Then  $G_1(w):=g_1^{-1}(w)= w +\sum_{n=2}^{\infty} A_n w^n$ and $G_1(w)/G_1'(w)= w - \sum_{n=2}^{\infty} B_n w^n$ where $w$ lies in some neighbourhood of the origin,
 \begin{align*}
B_2&:= 2(\beta-1), \quad B_n:=  2(\beta-1) \prod_{j=2}^{n-1}\left(\dfrac{2(n-1) (\beta-1)+ j}{j}\right) \quad (n>2)
\intertext{and}
 A_n&:= \dfrac{1}{n}\prod_{m=0}^{n-2} \left(\dfrac{ 2n(\beta-1) + m }{m+1} \right)\, \, (n \geq 2).
\end{align*}
Proceeding as in \eqref{p3eq11.3555} for $g_1$ and then comparing the coefficients  of $w^n$  give
\begin{align}
B_n=2(\beta-1) \sum_{j=1}^{n-1}  S_j(A_{2},A_{3}, \ldots, A_{n-2})\,\,(n \geq 2).\label{p3eq11.3502}
\end{align}
Since $f \in \mathcal{S}^*[2 \beta-1,1]$, applying Theorem \ref{p3thm1} in \eqref{p3eq11.355} and using \eqref{p3eq11.3502} give  $|\delta_n| \leq B_n$.  Clearly, the sharpness follows for the function  $g_1$.
\end{proof}

\begin{corollary}\label{p3thm1.01}
Let $g $,  given by \eqref{p3eq0.01},  be in $\Sigma^*[A,B]$ $( -1 \leq B \leq 1<A)$ and  $n(1-B)-(A-B) \leq 0$. Then for each $n \geq 0$,
$$ |b_n| \leq  \prod_{m=0}^n \left(\dfrac{(A-B) + m B}{m+1}\right).$$
The result is sharp.
\end{corollary}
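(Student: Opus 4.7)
The plan is to reduce the statement directly to Theorem~\ref{p3thm1} by the standard meromorphic-to-analytic inversion. Given $g\in\Sigma^*[A,B]$, I set $f(z):=1/g(1/z)$ for $z\in\mathbb{D}\setminus\{0\}$. The subordination condition keeps $g$ zero-free on $\mathbb{C}\setminus\overline{\mathbb{D}}$ and $g(\zeta)\sim\zeta$ at infinity, so $f$ extends to an analytic function on $\mathbb{D}$ with $f(0)=0$ and $f'(0)=1$. A short logarithmic differentiation yields the key identity $zf'(z)/f(z)=[\zeta g'(\zeta)/g(\zeta)]_{\zeta=1/z}$, so this quantity takes values in $p_0(\mathbb{D})$ and matches $p_0(0)=1$ at the origin; univalence of $p_0$ then upgrades this to the subordination $zf'(z)/f(z)\prec(1+Az)/(1+Bz)$, i.e.\ $f\in\mathcal{S}^*[A,B]$.

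The bound is now obtained by reading the $b_n$ directly from the machinery inside the proof of Theorem~\ref{p3thm1}. From $1/f(z)=g(1/z)=1/z+b_0+b_1 z+b_2 z^2+\cdots$ one reads off
\begin{equation*}
\left(\frac{f(z)}{z}\right)^{-1}=\frac{z}{f(z)}=1+\sum_{n=0}^{\infty} b_n\, z^{n+1},
\end{equation*}
so in the notation used there, $b_n=a_{n+1}^{(-1)}$, i.e.\ the $(n+1)$-st coefficient of $(f(z)/z)^{-t}$ at the single value $t=1$. Applying inequality~\eqref{p3eq8} with $t=1$ and $s=n+1$, its side condition $t\ge (s-1)(1-B)/(A-B)$ reads $A-B\ge n(1-B)$, which is precisely the hypothesis $n(1-B)-(A-B)\le 0$, and the inequality gives
\begin{equation*}
|b_n|=|a_{n+1}^{(-1)}|\le \prod_{m=0}^{n}\frac{(A-B)+mB}{m+1}.
\end{equation*}

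For sharpness I would take the meromorphic counterpart of the extremal in~\eqref{p3eq11.4}, namely $g_1(\zeta):=1/f_1(1/\zeta)$, which lies in $\Sigma^*[A,B]$ by the same inversion. A binomial expansion of $\zeta(1+B/\zeta)^{-(A-B)/B}$ (or the corresponding exponential expansion when $B=0$) shows that its $n$-th coefficient attains the stated bound in absolute value; the hypothesis $n(1-B)\le A-B$ guarantees that every factor $(A-B)+mB$ with $0\le m\le n$ is nonnegative, so no cancellation occurs. The only step that needs real care is the first, namely verifying that the pointwise image condition defining $\Sigma^*[A,B]$ for $g$ transforms into a genuine Ma--Minda subordination for $f$ on all of $\mathbb{D}$, including the removable singularity at the origin; once that is in place, the remainder is an immediate specialisation of the a priori estimate already established in the proof of Theorem~\ref{p3thm1}.
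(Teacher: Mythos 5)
Your proposal is correct and follows essentially the same route as the paper: identify $g$ with $1/f(1/\cdot)$ for some $f\in\mathcal{S}^*[A,B]$, read off $b_n=a_{n+1}^{(-1)}$ from $z/f(z)$, and apply the a priori estimate \eqref{p3eq8} with $t=1$, $s=n+1$, whose side condition is exactly the hypothesis $n(1-B)-(A-B)\le 0$; sharpness comes from the same extremal $g_1=1/f_1(1/\cdot)$. The only difference is that you spell out the subordination transfer and the nonnegativity of the factors in the extremal product, details the paper leaves implicit.
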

\begin{proof}
It is easy to observe that for any $g \in \Sigma^*[A,B]$, there exists $ f \in \mathcal{S}^*[A,B]$ such that for $z \in \mathbb{C} \setminus \overline{\mathbb{D}}$,  $g(z) = 1/f(1/z)$. Also, we note that the expansions  of  $f(z)^{-1}$  about the origin and $f(1/z)^{-1}$ about the infinity have same coefficients. Thus,  if  $z/f(z) = 1 + \sum_{n=1}^{\infty} a_n^{(-1)} z^n$  $(z \in \mathbb{D})$, then for $z \in \mathbb{C} \setminus \overline{\mathbb{D}}$,  we have
$$ \dfrac{g(z)}{z}= \dfrac{ 1}{zf( 1/z) }= 1 + \sum_{n=1}^{\infty} a_n^{(-1)} z^{-n}.$$
On comparing the coefficients, we obtain
\begin{align}
b_{n}= a_{n+1}^{(-1)} \quad (n \geq 0). \label{p3eq11.40}
\end{align}
An application of  \eqref{p3eq8} for $t=1$ and $s=n+1$ in the equation \eqref{p3eq11.40} gives the desired estimate.
Define  a function $
g_{1}: \mathbb{C} \setminus \overline{\mathbb{D}} \to \mathbb{C}$  by  \begin{align}
 g_{1}(z)= \frac{1}{f_{1}(1/z)} \label{p3eq11.21}
\end{align} where $f_{1}$ is given by \eqref{p3eq11.4}.
The  result is sharp for the function $g_1$  given by \eqref{p3eq11.21}.
\end{proof}
For $A=2 \beta-1$, $B=1$ $(\beta >1)$, the above result is mentioned in  \cite[Theorem 4.5, p.\ 17]{MR3436767}.
Next, we prove the  meromorphic counter part of the Theorem \ref{p3thm1}.
\begin{theorem}\label{p3thm2}
Let the function $g \in \Sigma^*[A,B]$ $(-1 \leq B  \leq 1<A)$ and $g^{-1}(w)= w + \sum_{n=0}^{\infty} \tilde{\gamma}_n w^{-n}$ in some neighbourhood of the infinity. Then  $|\tilde{\gamma}_0| \leq A-B$ and
$$ |\tilde{\gamma}_n| \leq  \dfrac{1}{n}\prod_{m=0}^n \left(\dfrac{(A-B)n + m B}{m+1}\right) \quad (n \geq 1).$$
 The result is sharp.
\end{theorem}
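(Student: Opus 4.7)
The plan is to reduce Theorem \ref{p3thm2} to the disc-case estimate \eqref{p3eq8} already established inside the proof of Theorem \ref{p3thm1}, using the same meromorphic-to-analytic correspondence exploited in Corollary \ref{p3thm1.01}. Concretely, I would set $f(\zeta) := 1/g(1/\zeta)$ for $\zeta \in \mathbb{D}$, which places $f$ in $\mathcal{S}^*[A,B]$, and invert the identity $g(z) = 1/f(1/z)$ to obtain $g^{-1}(w) = 1/F(1/w)$, where $F := f^{-1} = w + \sum_{n \geq 2} \gamma_n w^n$. Expanding
\begin{equation*}
\frac{1}{F(u)} = \frac{1}{u} - \gamma_2 - (\gamma_3 - \gamma_2^2)\, u - \cdots
\end{equation*}
in a punctured neighbourhood of $u=0$ and substituting $u = 1/w$ shows that $\tilde{\gamma}_0 = -\gamma_2$ and, for $n \geq 1$, $\tilde{\gamma}_n$ coincides with the coefficient of $u^n$ in $F(u)^{-1}$.

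The bound $|\tilde{\gamma}_0| \leq A-B$ is then immediate from Theorem \ref{p3thm1} in the case $n = 2$. For $n \geq 1$, I would apply Lemma \ref{p3lem1} with $t = -1$ to extract these coefficients directly from $f$. Since Lemma \ref{p3lem1} uses the convention that $a_j^{(-n)}$ denotes the coefficient of $z^j$ in $f(z)^{-n}$, whereas the proof of Theorem \ref{p3thm1} uses the rescaled convention $a_j^{(-n)} = [z^j](f(z)/z)^{-n}$, the two differ by an index shift of $n$. Under this shift, Lemma \ref{p3lem1} yields
\begin{equation*}
\tilde{\gamma}_n = -\frac{1}{n}\, a_{n+1}^{(-n)} \qquad (n \geq 1),
\end{equation*}
and I would finish by invoking \eqref{p3eq8} with $s = n+1$ and $t = n$: its admissibility hypothesis $t \geq (s-1)(1-B)/(A-B)$ here reduces to $A \geq 1$, which holds since $A > 1$. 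Substituting gives exactly the stated estimate.

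Sharpness would be witnessed by taking $g_1(z) = 1/f_1(1/z)$ (as in \eqref{p3eq11.21}), possibly preceded by a rotation in the spirit of the extremal $g_1$ constructed in the preceding corollary, since $f_1$ is already extremal for the inequality \eqref{p3eq8}. I do not expect any deep obstacle; the main nuisance will be the purely notational bookkeeping needed to reconcile the two indexing conventions for $a_j^{(-n)}$ and to align the target index $n$ of $\tilde{\gamma}_n$ with the exponent $s = n+1$ that appears in \eqref{p3eq8}.
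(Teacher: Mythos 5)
Your proposal is correct and follows essentially the same route as the paper: both pass to the associated $f\in\mathcal{S}^*[A,B]$ via $g(z)=1/f(1/z)$, identify $\tilde{\gamma}_n$ (up to sign) with $\tfrac{1}{n}a_{n+1}^{(-n)}$ through Jabotinsky's Lemma \ref{p3lem1}, and then invoke \eqref{p3eq8} with $t=n$, $s=n+1$, the admissibility condition reducing to $A\geq 1$. The only cosmetic difference is your treatment of $\tilde{\gamma}_0=-\gamma_2$ via Theorem \ref{p3thm1}, where the paper instead reads off $|\tilde{\gamma}_0|=|q_1|\leq A-B$ from $zf'/f=q$ and Lemma \ref{p3lem3}; both give the same bound.
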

\begin{proof}
Since $g \in \Sigma^*[A,B]$, there exists $ f \in \mathcal{S}^*[A,B]$ such that for $z \in \mathbb{C} \setminus \overline{\mathbb{D}}$,  $g(z) = 1/f(1/z)$ and $g^{-1}(w)= 1/f^{-1}(1/w)$, see \cite[Theorem 2.4, p.\ 459]{MR0188428}. Therefore, for each $n \geq 0$,  \begin{align}
|\tilde{\gamma}_n| = |\gamma_{n+1}^{(-1)}| \label{p3eq12}
 \end{align}
 where $\gamma_{n+1}^{(-1)}$ is  the coefficient of $w^{-(n+1)}$ in  $1/(wf^{-1}(1/w))=  1+ \sum_{n=1}^{\infty} \gamma_{n}^{(-1)}w^{-n}$.

  Since $f \in \mathcal{S}^*[A,B]$,  we have $z f'(z)/f(z)= q(z) \in \mathcal{P}[A,B]$. If  $q(z) = 1+ \sum_{n=1}^{\infty} q_n z^n$, then by applying Lemma \ref{p3lem1}, we have
  $$  \sum_{p=-\infty}^{\infty} \gamma_1^{(p)} z^{-p-1} =  \dfrac{f'(z)}{f(z)}= \dfrac{q(z)}{z}= \dfrac{1}{z} \left(1+ \sum_{n=1}^{\infty} q_n z^n \right).$$
 Therefore, in view of \eqref{p3eq12} and Lemma \ref{p3lem3},  $|\tilde{\gamma_0}| = |\gamma_1^{-1}| = |q_1| \leq A-B.$
For $n \geq 1$, an application  of  Lemma \ref{p3lem1} and the inequality \eqref{p3eq8} for $t=n$, $s=n+1$  in  \eqref{p3eq12} gives
 $$ |\tilde{\gamma}_n| = |\gamma_{n+1}^{(-1)}| = \dfrac{1}{n}|a_{n+1}^{(-n)}| \leq \dfrac{1}{n} \prod_{m=0}^{n} \left( \dfrac{(A-B)n + m B }{m+1}\right).$$
 The sharpness follows for the function $g_1$ given by \eqref{p3eq11.21}.
\end{proof}
For $A=2 \beta-1$, $B=1$ $(\beta >1)$, the above theorem reduces to  \cite[Theorem 4.8, \ p. 18]{MR3436767}.
Recall that for $-1 \leq B < A \leq 1$, $$ \mathfrak{I}[A,B]:=\left\{f \in \mathcal{S}: f'(z) \prec \dfrac{1+Az}{1+Bz}\right\}.$$
The  following theorem  gives the sharp inverse coefficient estimates for functions in the class $\mathfrak{I}[1,B]$ and its proof  is based on the fact that if $p \in \mathcal{P}[A,B]$ $(-1 \leq B <A \leq 1)$, then $1/p \in \mathcal{P}[-B,-A]$ $(-1 \leq -A< -B \leq 1)$.
\begin{theorem}\label{p3thm3}
For $-1 \leq B <1$, let $f \in \mathfrak{I}[1,B]$  and $g(z)= \int_{0}^{z} (1-t)/(1- B t) \,dt $\/ $(|z|<1)$. If $f^{-1}(w)=:F(w)= w + \sum_{n=2}^\infty \gamma_{n} w^n$ and $g^{-1}(w)=:G(w) = w + \sum_{n=2}^\infty A_{n} w^n$ where $w$ lies in some neighbourhood of the origin,  then  for each $n \geq 2$,  $|\gamma_{n}| \leq  A_n$. The result is sharp.
\end{theorem}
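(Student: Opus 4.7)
My starting point is the fact, highlighted in the statement, that the map $p \mapsto 1/p$ sends $\mathcal{P}[A,B]$ onto $\mathcal{P}[-B,-A]$; this is immediate from the representation $p = (1+Aw)/(1+Bw)$ upon replacing the Schwarz function $w$ by $-w$. Applied to $f' \in \mathcal{P}[1,B]$, this yields $\psi(z) := 1/f'(z) \in \mathcal{P}[-B,-1]$, i.e.\ $\psi \prec (1-Bz)/(1-z)$, and the subordinating function here is precisely $\psi_0(z) := 1/g'(z)$. Writing $\psi(z) = 1 + \sum_{k\ge 1} d_k z^k$ and $\psi_0(z) = 1 + (1-B)\sum_{k\ge 1} z^k$, Lemma~\ref{p3lem3} gives $|d_k| \le 1-B$, while every Taylor coefficient of $\psi_0$ equals $1-B$.

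Differentiating $F(f(z)) = z$ produces the functional identity $F'(w) = \psi(F(w))$, and similarly $G'(w) = \psi_0(G(w))$. Reading off the coefficient of $w^{n-1}$ (for $n \ge 2$) on both sides gives the parallel recursions
\begin{align*}
n\gamma_n &= \sum_{k=1}^{n-1} d_k\,[w^{n-1}] F(w)^k,\\
n A_n &= (1-B) \sum_{k=1}^{n-1} [w^{n-1}] G(w)^k.
\end{align*}
The structural ingredient is that $[w^{n-1}]F(w)^k = \sum_{j_1+\cdots+j_k=n-1}\gamma_{j_1}\cdots\gamma_{j_k}$ (with $\gamma_1 = 1$) is a polynomial in $\gamma_2,\ldots,\gamma_{n-k}$ whose coefficients are non-negative integers, and the identical polynomial evaluated at $A_2,\ldots,A_{n-k}$ equals $[w^{n-1}]G(w)^k$.

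I would then run the argument in two inductive stages on $n$. First, a quick induction on the recursion for $A_n$, together with $1-B \ge 0$, shows $A_n \ge 0$ for every $n \ge 1$: if $A_2,\ldots,A_{n-1} \ge 0$, each $[w^{n-1}]G(w)^k$ is a sum of non-negative products. With this positivity in hand, assume inductively that $|\gamma_j| \le A_j$ for $2 \le j \le n-1$. Non-negativity of the multinomial coefficients together with $A_j \ge 0$ yields
\[\bigl|[w^{n-1}]F(w)^k\bigr| \le [w^{n-1}]G(w)^k,\]
and combining with $|d_k| \le 1-B$ in the recursion gives
\[|n\gamma_n| \le \sum_{k=1}^{n-1}|d_k|\,[w^{n-1}]G(w)^k \le (1-B)\sum_{k=1}^{n-1}[w^{n-1}]G(w)^k = nA_n.\]
The base case $n=2$ reduces to $\gamma_2 = d_1/2$ against $A_2 = (1-B)/2$, and sharpness is attained at $f=g$. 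The only subtle point I expect is the positivity $A_n \ge 0$, which is forced by $\psi_0$ having a constant, non-negative sequence of Taylor coefficients; once that is in place, the main estimate is a clean termwise majorization of the $\gamma_n$-recursion by the $A_n$-recursion.
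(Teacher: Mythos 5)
Your proposal is correct and follows essentially the same route as the paper: pass to $F'(w)=\psi(F(w))$ with $\psi=1/f'\in\mathcal{P}[-B,-1]$, bound $|d_k|\leq 1-B$ via Lemma~\ref{p3lem3}, and majorize the $\gamma_n$-recursion termwise by the $A_n$-recursion using the non-negativity of the multinomial coefficients. The only (harmless) deviation is that you obtain $A_n\geq 0$ by induction directly from the recursion $nA_n=(1-B)\sum_k [w^{n-1}]G(w)^k$, whereas the paper derives a second recursion $(n+1)A_{n+1}=(1-B+n)A_n+\sum_{k=1}^{n-2}(k+1)A_{k+1}A_{n-k}$ from \eqref{p3eq1.111} for the same purpose; both arguments are valid.
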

\begin{proof}
Since $f' \in \mathcal{P}[1,B]$, $ f'(z) = p(z)$  for some  $p \in \mathcal{P}[1,B]$.  Let $w = f(z)$ then  $f'(z) F'(w)=1$  and so we have $$ F'(w)= P(F(w))$$  where    $P(z):= 1/p(z) = 1+ \sum_{n=1}^\infty c_n z^n  \in \mathcal{P}[-B,-1]$. This gives  $$1+ \sum_{n=1}^\infty (n+1) \gamma_{n+1} w^{n} =   1+ \sum_{n=1}^\infty c_n F(w)^n.$$ On comparing  the  coefficients of $w^n$, we have
\begin{align}  (n+1) \gamma_{n+1} = \sum_{i=1}^n c_i \gamma_{n,i} \quad (n \geq 1) \label{p3eq1}
\end{align}
 where $ \gamma_{n,i}$ denotes  the  coefficient of $w^n$ in the expansion of  $F(w)^i$ and $ \gamma_{n,n}=1$.
Since $g'(z) =   (1-z)/(1-B z) \in \mathcal{P}[1,B]$, proceeding as above,  we have
\begin{align}
G'(w)=  \dfrac{1- B G(w)}{1-G(w)} \label{p3eq1.111}
\end{align}
  which gives $$\sum_{n=1}^\infty (n+1) A_{n+1} w^{n} =   \sum_{n=1}^\infty (1-B)  G(w)^n.$$
Comparing  the  coefficients of $w^{n-1}$, we get
\begin{align}
n A_{n}=(1-B)  \sum_{i=1}^{n-1}  A_{n-1,i}  \quad (n  \geq 2) \label{p3eq1.112}
\end{align}
where $ A_{n-1,i}$ denotes  the  coefficient of $w^{n-1}$ in the expansion of  $G(w)^i$ and $ A_{n-1,n-1}=1$.
We first show that $A_n >0$ for all $n \geq 2$. By using the power series  expansion of $G$ in \eqref{p3eq1.111} and on comparing the coefficients of both sides, we obtain
\begin{align*}
2 A_2&= 1-B, \quad  3 A_3 = (1-B+2)A_2,  \quad \text{and} \\
(n+1)A_{n+1} &= (1-B+n)A_n + \sum_{k=1}^{n-2}(k+1) A_{k+1} A_{n-k} \quad (n > 2).
\end{align*}
Since $ -1 \leq B <1$, $A_2 = (1-B)/2 >0 $. By using induction on $n$, it can be easily seen from the above relations that $A_n >0$ for all $n \geq 2$.

 Next,  we shall show that  for all  $n \geq 2$, $|\gamma_n| \leq A_n$. We proceed by induction on $n$.
Since $P \in \mathcal{P}[-B,-1]$,  by using Lemma \ref{p3lem3},     $|c_i| \leq 1-B$  for each $i \geq 1$.  Clearly, the result holds for $n=2$.  Assume that  $ |\gamma_{i}| \leq A_i$ for $i \leq n-1$. It is easy to observe  that  $ \gamma_{n,i}= S_i(\gamma_{2}, \gamma_{3}, \dotsc, \gamma_{n-1})$ is a polynomial in $\gamma_{2}$, $\gamma_{3}$, $\dotsc$, $\gamma_{n-1}$ with non-negative coefficients and thus $ |\gamma_{n,i}| \leq  S_i(|\gamma_{2}|, |\gamma_{3}|, \dotsc, |\gamma_{n-1}|) \leq  S_i(A_2, A_3, \dotsc, A_{n-1})$.  Therefore, in view of \eqref{p3eq1} and \eqref{p3eq1.112},  we have $$ n |\gamma_{n}| \leq  \sum_{i=1}^{n-1} |c_i| |\gamma_{n-1,i}| \leq   (1-B) \sum_{i=1}^{n-1}   S_i(A_2, A_3, \dotsc, A_{n-2}) = (1-B)  \sum_{i=1}^{n-1}  A_{n-1,i} = n A_n$$ where $ A_{n-1,i}= S_i(A_2, A_3, \dotsc, A_{n-2})$ is  the  coefficient of $w^{n-1}$ in the expansion of  $G(w)^i$.
\end{proof}
For $B=-1$, the above theorem reduces to  the theorem given in \cite{MR749890}.

The following theorem has been proved in \cite[Theorem 4.4, p.\ 14]{MR3436767} by using the coefficient bounds of the functions in the class $\mathcal{P}$  but we are providing a slightly different proof by making use of the coefficient bounds of the functions in the class $\mathcal{P}[2\beta-1,1]$ $( \beta > 1)$ which shortens the computations involved in the proof to some extent.
\begin{theorem}\label{p3thm4}
Let $f(z)= z+ a_2 z^2  + a_3 z^3 + \cdots \in \mathcal{K }[2 \beta -1,1]$  $(\beta > 1)$ and  $f^{-1}(w)=:F(w)= w + \sum_{n=2}^\infty \gamma_{n} w^n$ in some neighbourhood of the origin.  Then  for $n \geq 2$,
$$ |\gamma_{n}| \leq  \dfrac{1}{n}\prod_{m=0}^{n-2} \left(\dfrac{ 2(\beta -1) + m (2 \beta -1)}{m+1} \right). $$
The result is sharp.
\end{theorem}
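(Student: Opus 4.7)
The plan is to mirror the proof of the corollary following Theorem~\ref{p3thm1} (which treats $F/F'$ when $f \in \mathcal{S}^*[2\beta-1,1]$), with the starlike identity $F(w)/F'(w) = w\,p(F(w))$ replaced by its convex analogue $(F(w)/F'(w))' = p(F(w))$. I would derive this identity by differentiating $F = F'\cdot(F/F')$ to obtain $(F/F')' = 1 - F F''/F'^2$ and using $F'(w) = 1/f'(F(w))$ together with $F''(w) = -f''(F(w))\,F'(w)^3$ to rewrite the right-hand side as $1 + zf''(z)/f'(z)\bigr|_{z=F(w)} = p(F(w))$, where $p(z) = 1 + \sum_{k \geq 1} c_k z^k \in \mathcal{P}[2\beta-1,1]$. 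By Lemma~\ref{p3lem3}, $|c_k| \leq 2(\beta-1) = A-B$, and this bound---available because $p$ lies in the narrower class $\mathcal{P}[2\beta-1,1]$ rather than merely in $\mathcal{P}$---is what ultimately shortens the computation relative to the proof in \cite{MR3436767}.

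Writing $F(w)/F'(w) = w + \sum_{k \geq 2} H_k w^k$ and comparing $w^{n-1}$ coefficients in $(F/F')'(w) = p(F(w))$ gives, for $n \geq 2$,
\begin{equation*}
n H_n = \sum_{j=1}^{n-1} c_j\, \sigma_{n-1,j}(\gamma_2, \dotsc, \gamma_{n-j}),
\end{equation*}
where $\sigma_{n-1,j}$ denotes the $(n-1)$-th coefficient of $F(w)^j$, a polynomial with nonnegative integer coefficients and $\sigma_{n-1,n-1}=1$. A second coefficient comparison in $F(w) = F'(w)(F/F')(w)$, isolating $\gamma_n$, yields the recursion
\begin{equation*}
(n-1)\gamma_n = -H_n - \sum_{j=2}^{n-1}(n-j+1)\gamma_{n-j+1}\, H_j.
\end{equation*}

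Next I would identify the extremal $g_1(z) := -f_1(-z) = (1-(1-z)^{2\beta-1})/(2\beta-1) \in \mathcal{K}[2\beta-1,1]$, where $f_1$ is as in \eqref{p3eq11.4} with $A = 2\beta-1$, $B = 1$. Its associated Janowski function is $p_1(z) = (1-Az)/(1-z)$, whose Taylor coefficients all equal $-(A-B)$, and its inverse $G_1(w) = 1 - (1 - Aw)^{1/A} = w + \sum_{n \geq 2} \Gamma_n w^n$ has strictly positive coefficients $\Gamma_n = (1/n)\prod_{m=0}^{n-2}\bigl(2(\beta-1) + m(2\beta-1)\bigr)/(m+1)$ matching the asserted bound; in particular every $H_k^{(G_1)}$ ($k \geq 2$) is strictly negative.

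The proof concludes by strong induction. The base case $n=2$ gives $\gamma_2 = -c_1/2$, hence $|\gamma_2| \leq (A-B)/2 = \Gamma_2$. For $n \geq 3$, $|c_j| \leq A-B$ and the inductive hypothesis $|\gamma_k| \leq \Gamma_k$ ($k<n$), combined with the monotonicity of each $\sigma_{n-1,j}$ in nonnegative arguments, yield $|H_k| \leq |H_k^{(G_1)}|$ for $2 \leq k \leq n$. Substituting into the recursion and using $|\gamma_{n-j+1}| \leq \Gamma_{n-j+1}$ in each summand, and observing that $G_1$ satisfies both relations as equalities with all signs aligned ($\Gamma_k > 0$, $H_k^{(G_1)} < 0$), gives $(n-1)|\gamma_n| \leq (n-1)\Gamma_n$. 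The main obstacle I anticipate is precisely this sign-alignment step: verifying that the triangle-inequality bounds on $|H_n|$ and on $|\gamma_n|$ telescope exactly into the extremal identity and recursion without introducing any slack---an argument secured by the explicit structure of $G_1$ and by the uniform maximality $|c_k^{(p_1)}| = A-B$.
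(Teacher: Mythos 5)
Your proposal is correct and follows essentially the same route as the paper: the identity $\frac{d}{dw}\big(F(w)/F'(w)\big)=p(F(w))$ with $p\in\mathcal{P}[2\beta-1,1]$, the bound $|c_j|\le 2(\beta-1)$ from Lemma~\ref{p3lem3}, and an induction in which the extremal function with derivative $(1-z)^{2(\beta-1)}$ turns every triangle inequality into an equality because its associated coefficients are all of maximal modulus with aligned signs; your two-step recursion through the coefficients $H_k$ of $F/F'$ is precisely the paper's single recursion \eqref{p3eq13.02} after substituting $H_j=\frac{1}{j}\sum_i c_i\gamma_{j-1,i}$. One cosmetic correction: $-f_1(-z)=z(1-z)^{2(\beta-1)}$ is the Alexander transform $zg_1'(z)$ of your extremal, not $g_1(z)=\big(1-(1-z)^{2\beta-1}\big)/(2\beta-1)$ itself, though the function you actually compute with (and its inverse $1-(1-Aw)^{1/A}$) is the correct one.
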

\begin{proof}
Since $f \in \mathcal{K }[2 \beta -1,1]$, we have $1 + z f''(z)/f'(z) = p(z)$ where $p(z) =1 + \sum_{i=1}^{\infty} c_i z^i  \in  \mathcal{P}[2\beta-1, 1]$ and  $\beta >1$. This gives
\begin{align}
\dfrac{d}{dw}\left(\dfrac{F(w)}{F'(w)}\right) = 1- \dfrac{F(w) F''(w)}{(F'(w))^2} = p(F(w)) \label{p3eq13.01}
\end{align}
where $ w=f(z)$ lies in some disk around the origin. Integrate the equation \eqref{p3eq13.01} along the line segment  $[0,w]$ and using the power series expansions of $F$ and $p$, we have
\begin{align}
\sum_{n=1}^{\infty} \gamma_{n} w^n = \sum_{n=1}^{\infty} n \gamma_{n} w^n +  \sum_{n=2}^{\infty} \left(\sum_{k=1}^{n-1} k \gamma_{k} \sum_{j=1}^{n-k}  c_j \dfrac{\gamma_{n-k,j}}{n-k+1}\right) w^n \label{p3eq1.114}
\end{align}
 where $\gamma_1=1$ and $\gamma_{n-k,j}$ denotes  the  coefficient of $w^{n-k}$ in the expansion of $F(w)^j$ with  $\gamma_{n-k,n-k}=1$.

 On comparing  the  coefficients of $w^n$, we have
 \begin{align}
 -(n-1) \gamma_{n} =\sum_{k=1}^{n-1}  \dfrac{k \gamma_{k} }{n-k+1}\sum_{j=1}^{n-k} c_j \gamma_{n-k,j} \quad ( n \geq 2). \label{p3eq13.02}
 \end{align}
  Define a function  $f_1: \mathbb{D} \to \mathbb{C}$ such that
\begin{align}
 f_1'(z)= (1- z)^{2(\beta-1)}.  \label{p3eq13.06}
\end{align}  Then  $F_1(w):= f_1^{-1}(w)= w + A_2 z^2 + A_3 z^3 + \cdots$ where for  $n \geq 2$,
\begin{align}
A_n:= \dfrac{1}{n}\prod_{m=0}^{n-2} \left(\dfrac{ 2(\beta -1) + m (2 \beta -1)}{m+1} \right). \label{p3eq13.07}
\end{align}
We shall show that  for all $n \geq 2$, $|\gamma_{n}| \leq A_n$. We  proceed by induction on $n$. Since $p \in \mathcal{P}[2 \beta-1,1]$ $(\beta >1)$,   an application of Lemma \ref{p3lem3} gives  $|c_j| \leq 2(\beta-1)$  for each $j \geq 1$.  Therefore,  the desired estimate holds for $n=2$.
 Assume that the theorem is true for $j \leq n-1$ and thus we have $ |\gamma_{j}| \leq A_j$ for $j \leq n-1$.
Since  $ \gamma_{n,j}= S_j(\gamma_{2}, \gamma_{3}, \dotsc, \gamma_{n-1})$ is a polynomial in $\gamma_{2}$, $\gamma_{3}$, $\dotsc$, $\gamma_{n-1}$ with non-negative coefficients, we have $ |\gamma_{n,j}| \leq  S_j(|\gamma_{2}|, |\gamma_{3}|, \dotsc, |\gamma_{n-1}|) \leq S_j(A_2, A_3, \dotsc, A_{n-1}) $. An application of  induction hypothesis  and bounds of $c_j$ in \eqref{p3eq13.02} gives
\begin{align}
(n-1) |\gamma_{n}| &\leq 2(\beta-1)\sum_{k=1}^{n-1}  \dfrac{k |\gamma_{k}| }{n-k+1}\sum_{j=1}^{n-k}  |\gamma_{n-k,j}| \notag \\
&\leq  2(\beta-1)\sum_{k=1}^{n-1}  \dfrac{k A_{k} }{n-k+1}\sum_{j=1}^{n-k}  S_j(A_2, A_3, \dotsc, A_{n-k-1}) \notag \\
&= 2(\beta-1)\sum_{k=1}^{n-1}  \dfrac{k A_{k} }{n-k+1}\sum_{j=1}^{n-k}   A_{n-k,j} \label{p3eq13.04}
\end{align}
where $A_1=1$ and  $ A_{n-k,j}$ denotes  the  coefficient of $w^{n-k}$ in the expansion of  $F_1(w)^j$ with $ A_{n-k,n-k}=1$.
 We now show that for each $n \geq 2$,
\begin{align}
 2(\beta-1) \sum_{k=1}^{n-1}  \dfrac{k A_{k} }{n-k+1}\sum_{j=1}^{n-k}   A_{n-k,j} = (n-1) A_{n}.\label{p3eq13.05}
 \end{align}

For $f_1$, given by \eqref{p3eq13.06},  we have $$ 1 +  \dfrac{z f_1''(z)}{f_1'(z)} = \dfrac{1- (2\beta-1) z}{1-z}.$$ In terms of  $F_1:=f_1^{-1}$, the above equation can be rewritten as
  $$ \dfrac{d}{dw}\left(\dfrac{F_1(w)}{F_1'(w)}\right) = 1- \dfrac{F_1(w) F_1''(w)}{(F_1'(w))^2} = \dfrac{1- (2 \beta -1)F_1(w)}{1- F_1(w)}.$$
By proceeding as in \eqref{p3eq1.114}, we obtain $$\sum_{n=1}^{\infty} A_{n} w^n = \sum_{n=1}^{\infty} n A_{n} w^n -2(\beta-1) \sum_{n=2}^{\infty} \left(\sum_{k=1}^{n-1} k A_{k} \sum_{j=1}^{n-k} \dfrac{A_{n-k,j}}{n-k+1}\right) w^n.$$
On comparing  the  coefficients of $w^n$, we get  $$(n-1) A_{n}= 2(\beta-1)\sum_{k=1}^{n-1}  \dfrac{k A_{k} }{n-k+1}\sum_{j=1}^{n-k}   A_{n-k,j} \quad (n \geq 2).$$
This proves \eqref{p3eq13.05} and hence, in view of \eqref{p3eq13.04}, we have $|\gamma_n| \leq A_n$. The sharpness follows for the function $f_1$, given in \eqref{p3eq13.06}.
\end{proof}
\begin{corollary}
Let $f \in \mathcal{K}[2 \beta -1,1]$  $(\beta > 1)$ and  $f^{-1}(w)=:F(w)= w + \sum_{n=2}^\infty \gamma_{n} w^n$ in some neighbourhood of the origin.  If $F(w)/F'(w)= w + \sum_{n=2}^\infty \delta_{n} w^n$, then $|\delta_{2}| \leq \beta-1$ and  for $n >2$,
$$|\delta_{n}| \leq  \dfrac{2(\beta-1)}{n(n-1)}\prod_{m=0}^{n-3} \left(\dfrac{2 \beta + m (2 \beta -1)}{m+1} \right).$$
The result is sharp.
\end{corollary}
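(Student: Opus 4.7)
The plan is to follow the template of the preceding corollary, with Theorem \ref{p3thm4} in place of Theorem \ref{p3thm1}. Since $f \in \mathcal{K}[2\beta-1,1]$, write $1+zf''(z)/f'(z) = p(z) = 1 + \sum_{j\geq 1}c_j z^j$ with $p \in \mathcal{P}[2\beta-1,1]$. The identity \eqref{p3eq13.01} integrates to $F(w)/F'(w) = \int_{0}^{w} p(F(t))\,dt$; expanding $F(t)^j = \sum_{n \geq j}\gamma_{n,j}\,t^n$ (with $\gamma_{j,j}=1$) and integrating term by term yields
\begin{align*}
\delta_{n} = \frac{1}{n}\sum_{j=1}^{n-1} c_j\,\gamma_{n-1,j}, \qquad n \geq 2,
\end{align*}
where $\gamma_{n-1,j} = S_j(\gamma_{2},\dotsc,\gamma_{n-2})$ is a polynomial in the inverse coefficients of $F$ with non-negative coefficients and $\gamma_{n-1,n-1}=1$.

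Combining the bound $|c_j| \leq 2(\beta-1)$ from Lemma \ref{p3lem3} with $|\gamma_k| \leq A_k$ from Theorem \ref{p3thm4}, where $A_k$ denotes the $k$-th inverse coefficient of the extremal $f_1$ defined by \eqref{p3eq13.06}, and using the non-negativity of the $S_j$'s to conclude $|\gamma_{n-1,j}| \leq A_{n-1,j}$ (the coefficient of $w^{n-1}$ in $F_1(w)^j$), yields
\begin{align*}
|\delta_n| \leq \frac{2(\beta-1)}{n}\sum_{j=1}^{n-1} A_{n-1,j}, \qquad n \geq 2.
\end{align*}
For $n=2$ the sum reduces to $A_{1,1}=1$, giving $|\delta_2| \leq \beta-1$.

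To identify the right-hand side with the stated product and to establish sharpness simultaneously, pass to the rotated extremal $g_1(z) := -f_1(-z)$, which lies in $\mathcal{K}[2\beta-1,1]$. Integrating $g_1'(z) = (1+z)^{2(\beta-1)}$ gives $g_1(z) = ((1+z)^{2\beta-1}-1)/(2\beta-1)$ and the explicit inverse $G_1(w) = (1+(2\beta-1)w)^{1/(2\beta-1)} - 1$. The identity $G_1(w)/G_1'(w) = z\,g_1'(z)$, with $z = G_1(w)$, then collapses to
\begin{align*}
\frac{G_1(w)}{G_1'(w)} = 1 + (2\beta-1)w - \bigl(1+(2\beta-1)w\bigr)^{(2\beta-2)/(2\beta-1)}.
\end{align*}
A binomial expansion of the right side, together with the algebraic identity $k(2\beta-1) - 2(\beta-1) = 1 + (k-1)(2\beta-1)$ and the reindexing $k = m+2$, simplifies $|\tilde{B}_n| := \bigl|[w^n](G_1/G_1')\bigr|$ to the product $\frac{2(\beta-1)}{n(n-1)}\prod_{m=0}^{n-3}\frac{2\beta+m(2\beta-1)}{m+1}$ in the statement. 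Applying the formula $\delta_n = \frac{1}{n}\sum c_j\,\gamma_{n-1,j}$ to $f=g_1$ itself, whose Taylor data $c_j = 2(\beta-1)(-1)^{j-1}$ and $\gamma_{n-1,j}^{(g_1)} = (-1)^{n-1+j}A_{n-1,j}$ align so that every summand carries the common sign $(-1)^n$, shows that equality is attained for $g_1$, so the bound is sharp. The principal obstacle is precisely this final bookkeeping step—matching the combinatorial sum $\sum_{j=1}^{n-1} A_{n-1,j}$ to the stated product—which I sidestep by exploiting the explicit closed form of $g_1$ and $G_1$, avoiding any direct induction on the recursion \eqref{p3eq13.05}.
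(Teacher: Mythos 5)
Your argument is correct and follows the same skeleton as the paper's proof: integrate \eqref{p3eq13.01} to get $\delta_n=\tfrac1n\sum_{j=1}^{n-1}c_j\gamma_{n-1,j}$, apply Lemma \ref{p3lem3} and Theorem \ref{p3thm4} together with the non-negativity of the polynomials $S_j$ to obtain $|\delta_n|\leq \tfrac{2(\beta-1)}{n}\sum_{j=1}^{n-1}A_{n-1,j}$, and then identify that sum with the stated product by evaluating the same formula on an extremal function. The one place you diverge is the identification step: the paper re-runs the coefficient comparison \eqref{p3eq1.1145} for $f_1$ itself and simply asserts the closed form $F_1(w)/F_1'(w)=w-\sum B_nw^n$ with the stated $B_n$, whereas you actually prove that closed form by passing to the rotation $g_1(z)=-f_1(-z)$, inverting explicitly to $G_1(w)=(1+(2\beta-1)w)^{1/(2\beta-1)}-1$, and reading off $B_n$ from the binomial expansion of $1+(2\beta-1)w-(1+(2\beta-1)w)^{(2\beta-2)/(2\beta-1)}$ (your sign bookkeeping $c_j=2(\beta-1)(-1)^{j-1}$, $\gamma^{(g_1)}_{n-1,j}=(-1)^{n-1+j}A_{n-1,j}$ checks out, so every summand indeed carries the sign $(-1)^n$ and equality holds for $g_1$). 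Your version is slightly more self-contained, since it supplies the computation the paper leaves implicit; the paper's version avoids the binomial series at the cost of taking the expansion of $F_1/F_1'$ on faith. Note also that the rotation is not strictly needed: $f_1$ itself already attains the bound since $\delta_n^{(f_1)}=-B_n$.
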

\begin{proof}
On integrating  the equation \eqref{p3eq13.01} along the line segment  $[0,w]$ and using the power series expansions of $F/F'$, $F$ and $p$, we have
\begin{align}
w+ \sum_{n=2}^{\infty} \delta_{n} w^n = w + \sum_{n=2}^{\infty} \sum_{j=1}^{n-1}  c_j \dfrac{\gamma_{n-1,j}}{n} w^n  \label{p3eq1.1145}
\end{align}
 where  $\gamma_{n-1,j}$ denotes  the  coefficient of $w^{n-1}$ in the expansion of $F(w)^j$ with  $\gamma_{n-1,n-1}=1$.
 Note that $ \gamma_{n-1,j}= S_j(\gamma_{2}, \gamma_{3}, \dotsc, \gamma_{n-2})$ is a polynomial in $\gamma_{2}$, $\gamma_{3}$, $\dotsc$, $\gamma_{n-2}$ with non-negative coefficients.  On comparing the coefficients  of $w^n$ in \eqref{p3eq1.1145} and using Lemma \ref{p3lem3} and  Theorem \ref{p3thm4}, we have
\begin{align}
 |\delta_{n}| &\leq \frac{2(\beta-1)}{n} \sum_{j=1}^{n-1}  S_j(A_2, A_3, \dotsc, A_{n-2}) \notag \\
&= \frac{2(\beta-1)}{n} \sum_{j=1}^{n-1} A_{n-1,j}\label{p3eq13.045}
\end{align}
where  $ A_{n-1,j}=  S_j(A_2, A_3, \dotsc, A_{n-2})$ denotes  the  coefficient of $w^{n-1}$ in the expansion of  $F_1(w)^j$ with $ A_{n-1,n-1}=1$ and $F_1$ is given by \eqref{p3eq13.07}. Corresponding to $F_1$, $F_1(w)/F_1'(w)=w- \sum_{n=2}^{\infty} B_{n} w^n$ where $$B_2:= (\beta-1)\quad \text{and} \quad   B_n:= \dfrac{2(\beta-1)}{n(n-1)}\prod_{m=0}^{n-3} \left(\dfrac{2 \beta + m (2 \beta -1)}{m+1} \right) \, \, (n >2). $$
For $f_1$, given by \eqref{p3eq13.06}, by proceeding as in \eqref{p3eq1.1145}, we have
$$ w- \sum_{n=2}^{\infty}B_{n} w^n = w - \sum_{n=2}^{\infty} \sum_{j=1}^{n-1} 2(\beta-1)\dfrac{A_{n-1,j}}{n} w^n.$$
On comparing the coefficients of $w^n$, we obtain
\begin{align}
B_n= \dfrac{2(\beta-1)}{n}\sum_{j=1}^{n-1} A_{n-1,j}.  \label{p3eq13.0455}
\end{align}
In view of \eqref{p3eq13.045} and \eqref{p3eq13.0455}, the desired estimates follow.
 \end{proof}
In the generalized class $\mathcal{K }[A,B]$ $(-1 \leq B \leq 1 <A)$, the technique used in the  Theorem \ref{p3thm4}  does not hold  true. However,  we are able to give the sharp estimation for the initial  inverse coefficients for functions in $\mathcal{K }[A,B]$.
\begin{theorem}\label{p3thm5}
Let $f(z)= z+ a_2 z^2  + a_3 z^3 + \cdots \in \mathcal{K }[A,B]$  $(  -1 \leq B  \leq 1 <A)$ and  $f^{-1}(w)=:F(w)= w + \sum_{n=2}^\infty \gamma_{n} w^n$ in some neighbourhood of the origin.  Then  for $n= 2, \dotsc,6$,
$$ |\gamma_{n}| \leq  \dfrac{1}{n}\prod_{m=0}^{n-2} \left(\dfrac{ (A -B) + m A}{m+1} \right).$$
The result is sharp.
\end{theorem}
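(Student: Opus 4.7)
The plan is to proceed by direct computation, because (as flagged in the paragraph preceding the statement) the integration-induction trick used in Theorem~\ref{p3thm4} relies on the subordination having a very special structure when $B=1$, and this breaks once $A$ and $B$ are decoupled. First, write the defining subordination as
$$ 1+\frac{zf''(z)}{f'(z)}=\frac{1+Aw(z)}{1+Bw(z)},\quad\text{i.e.,}\quad zf''(z)(1+Bw(z))=(A-B)w(z)f'(z), $$
where $w(z)=c_1z+c_2z^2+c_3z^3+\cdots$ is a Schwarz function, and compare coefficients of $z^{n-1}$ on both sides to obtain explicit polynomial expressions $a_n=a_n(c_1,\dots,c_{n-1};A,B)$ for $n=2,\dots,6$.

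Once the $a_n$'s are in hand, I would use the identity $F(f(z))=z$ (equivalently, Lagrange inversion as packaged in Lemma~\ref{p3lem1}) to express $\gamma_n$ for $n=2,\dots,6$ as polynomials in $a_2,\dots,a_n$ and, by back-substitution, as polynomials in $c_1,\dots,c_{n-1}$. The case $n=2$ is immediate: $|\gamma_2|=(A-B)|c_1|/2\le(A-B)/2$. For $n=3$, a short computation yields $\gamma_3=(A-B)\bigl[(2A-B)c_1^2-c_2\bigr]/6$, and the Fekete--Szeg\H{o}--type inequality $|c_2-\mu c_1^2|\le\max\{1,|\mu|\}$ (which follows from $|c_2|\le1-|c_1|^2$) applied with $\mu=2A-B>1$ delivers the bound $(A-B)(2A-B)/6=\tfrac{1}{3!}\prod_{k=1}^{2}(kA-B)$. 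For $n=4,5,6$ the procedure is the same in spirit: after careful factoring, $\gamma_n$ takes the form $\tfrac{A-B}{n!}\bigl(\text{linear combination of }c_{n-1}\text{ and products of lower }c_j\bigr)$, to which one applies the classical Schur-parametrization hierarchy, starting from $|c_1|\le1$ and $|c_2|\le 1-|c_1|^2$ and iterating; the hypothesis $A>1\ge B$ ensures that each factor $kA-B$ is positive and dominates the corresponding cross-term, so that the estimate telescopes down to $\tfrac{1}{n!}\prod_{k=1}^{n-1}(kA-B)$, which is exactly the claimed bound.

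Sharpness is exhibited by the function $f_0$ corresponding to $w(z)=z$, namely
$$ f_0'(z)=(1+Bz)^{(A-B)/B}\ (B\neq 0),\qquad f_0'(z)=e^{Az}\ (B=0), $$
for which a direct application of Lagrange inversion shows that the $n$-th coefficient of $f_0^{-1}$ has absolute value exactly $\tfrac{1}{n!}\prod_{k=1}^{n-1}(kA-B)$; an initial rotation $f(z)\mapsto\overline{\zeta}f(\zeta z)$ supplies the correct sign. The principal obstacle is purely combinatorial: for $n=5$ and especially $n=6$, $\gamma_n$ contains a rapidly growing number of monomials in $c_1,\dots,c_5$, and the telescoping that converts the weighted sum of Schur bounds into $\prod_{k=1}^{n-1}(kA-B)/n!$ is not transparent until one factors the expression by hand and chooses the right grouping. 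This is precisely why the theorem stops at $n=6$ rather than continuing to general $n$.
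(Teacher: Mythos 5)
Your outline (express $a_2,\dots,a_6$ in terms of the coefficients of the subordinating Schwarz function, invert the series via $F(f(z))=z$, then bound $\gamma_n$) matches the paper's skeleton, and your computations for $n=2$ and $n=3$ are correct: the Fekete--Szeg\H{o} bound $|c_2-\mu c_1^2|\le\max\{1,|\mu|\}$ with $\mu=2A-B>1$ does give $|\gamma_3|\le (A-B)(2A-B)/6$. But the substance of the theorem is $n=4,5,6$, and there your proposal stops at an assertion: that after ``careful factoring'' the functional ``telescopes'' under the hierarchy $|c_1|\le 1$, $|c_2|\le 1-|c_1|^2,\dots$. That step is the entire difficulty, and it is not a routine telescoping. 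Sharp bounds for degree-five polynomial functionals of Schwarz coefficients (for $n=6$ the expression involves $c_5$, $c_1c_4$, $c_2c_3$, $c_1^2c_3$, $c_1c_2^2$, $c_1^3c_2$ and $c_1^5$) do not follow by iterating the coefficient inequalities term by term, because the termwise bounds are in general not simultaneously attainable; you give no argument that the particular coefficient combinations arising here permit it. Your closing remark that the remaining obstacle is ``purely combinatorial'' understates the problem: if it were only bookkeeping, the theorem would not stop at $n=6$.

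What the paper actually does to close this gap is a change of variables that makes the maximization trivial. It replaces the target by $(1-Az)/(1-Bz)$ (a rotation of the class), writes the subordinating quantity as $\varphi\big((p_1-1)/(p_1+1)\big)$ with $p_1(z)=1+c_1z+c_2z^2+\cdots$ in the Carath\'eodory class $\mathcal{P}$ (so $|c_i|\le 2$), and expresses $\gamma_2,\dots,\gamma_6$ as explicit polynomials in these $c_i$. The decisive step is then to prove that \emph{every} monomial in these polynomials carries a non-negative coefficient; this reduces to showing that four explicit polynomials $p(A,B)$, $q(A,B)$, $r(A,B)$, $s(A,B)$ are non-negative on the region $-1\le B\le 1<A$, which the paper does by monotonicity in $A$. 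Once non-negativity is established, the termwise triangle inequality with $|c_i|=2$ is attained simultaneously by $p_1(z)=(1+z)/(1-z)$, i.e.\ by $f_0'(z)=(1-Bz)^{(A-B)/B}$, so the bound and its sharpness come out together. To salvage your route you would need either to carry out this positivity verification (which is the real content of the proof) or to supply genuine sharp estimates for the higher-order Schwarz-coefficient functionals; neither appears in the proposal.
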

\begin{proof}
Since $f \in \mathcal{K }[A,B]$, $1+ z f''(z)/f'(z) \prec (1+A z)/(1+Bz)$ which is equivalent to $1+ z f''(z)/f'(z) \prec (1-A z)/(1-Bz)$. Let $g(z):= z f'(z)= z + \sum_{n=2}^{\infty}n a_n z^n$ and $p(z):= z g'(z)/g(z)= 1 + b_1 z + b_2 z^2 + \cdots$. Then $p(z) \prec (1- A z)/(1-B z)$ and  for $n >1$,  we have
\begin{align}
(n-1) n a_n = \sum_{k=1}^{n-1} (n-k) b_k a_{n-k}. \label{p3eq14}
\end{align}
It is easy to observe that if $p \prec \varphi$,  then
\begin{align}
p(z)=  \varphi \left(\dfrac{p_1(z)-1}{p_1(z)+1}\right), \quad  p_1(z)= 1+ c_1 z + c_2 z^2 + \cdots \in \mathcal{P}. \label{p3eq15}
\end{align}
Using \eqref{p3eq14} and \eqref{p3eq15} for $\varphi= (1-A z)/(1-B z)$, the coefficients $a_i$ can be expressed  in terms of $c_i$, $A$ and $B$, see \cite{MR3348983}. In particular, we have
\begin{align}
a_2 &= -\dfrac{1}{4}(A-B) c_1, \notag \\
a_3 &= \dfrac{1}{24} (A-B) \left((A-2 B+1)c_1^2  -2 c_2\right),  \notag \\
a_4 &= -\dfrac{1}{192} (A-B) \left((A-2 B+1) (A-3 B+2)c_1^3  - 2(3 A-7 B+4) c_1  c_2 + 8 c_3\right), \notag  \\
a_5 &=  \dfrac{1}{1920}(A-B) \big(-4 (3 A^2-17 A B+11 A+23 B^2-29 B+9)c_1^2 c_2  \notag \\ &\quad{}+ (A-2 B+1) (A-3 B+2) (A-4 B+3) c_1^4  + 16 (2 A-5 B+3)c_1  c_3 \notag \\ &\quad{}+ 12  (A-3 B+2)c_2^2 - 48 c_4\big) \notag \\
\intertext{and}
a_6 &= \dfrac{1}{23040}(A-B)\big( -(A-5 B+4) (A-4 B+3) (A-3 B+2) (A-2 B+1)c_1^5 \notag \\ &\quad{}+ 4  (5 A^3-50 A^2 B+35 A^2+160 A B^2-220 A B+75 A-163 B^3+329 B^2 \notag \\ &\quad{}-219 B+48) c_1^3 c_2  - 16 (5 A^2-30 A B+20 A+43 B^2-56 B+18)c_1^2 c_3 \notag \\ &\quad{}+ 32(5 A-17 B+12) c_2 c_3  -  4 (15 A^2-100 A B+70 A+157 B^2-214 B+72) c_1 c_2^2  \notag \\ &\quad{}+ 48  (5 A-13 B+8) c_1 c_4 - 384 c_5 \big). \notag
\end{align}
Using power series expansions of $f$ and $f^{-1}$ in the relation $f(f^{-1}(w))=w$,  or $$w= f^{-1}(w) + a_2 (f^{-1}(w))^2
 + \cdots, $$ we obtain
 \begin{align}
\gamma_2 &= -a_2, \notag \\
\gamma_3 &= 2 a_2^2-a_3,  \notag \\
\gamma_4 &= -5 a_2^3 + 5 a_2 a_3 - a_4, \notag  \\
\gamma_5 &=  14 a_2^4- 21 a_2^2 a_3 + 6 a_2 a_4+3a_3^2-a_5 \notag \\
\intertext{and}
\gamma_6 &=7 \big(-6 a_2^5 + 12 a_2^3 a_3-4 a_2^2 a_4 + a_2 (a_5-4 a_3^2)+ a_3 a_4\big)-a_6. \notag
\end{align}
Substituting the expressions of $a_i$ in terms of $c_i$ in the above expressions of $\gamma_{i}$, we have
 \begin{align}
\gamma_2 &= \dfrac{1}{4}(A-B) c_1, \notag \\
\gamma_3 &= \dfrac{1}{24} (A-B) \left((2 A-B-1)c_1^2 + 2 c_2\right),  \notag \\
\gamma_4 &= \dfrac{1}{192} (A-B) \left( (2 A-B-1) (3 A-B-2)c_1^3 + 2 (7 A-3 B-4)c_1 c_2 + 8 c_3\right), \notag  \\
\gamma_5 &= \dfrac{1}{1920}(A-B) \big(p(A,B)\ c_1^2 c_2  + (2 A-B-1) (3 A-B-2) (4 A-B-3) c_1^4 \notag \\ &\quad{}+ 8  (11 A-5 B-6)c_1 c_3  + 4  (7 A-B-6)c_2^2 + 48 c_4\big) \notag \\
\intertext{and}
\gamma_6 &= \dfrac{1}{23040}(A-B) \big(q(A,B)\ c_1^2 c_3   + r(A,B) \ c_1 c_2^2 + 384  (2 A-B-1) c_1 c_4  \notag \\ &\quad{}+ s(A,B)\  c_1^3 c_2 +(2 A-B-1) (3 A-B-2) (4 A-B-3) (5 A-B-4)c_1^5   \notag \\ &\quad{}+ 16(25 A-B-24) c_2 c_3  + 384 c_5\big) \notag
\end{align}
where
\begin{align*}
p(A,B)&:= 4  (23 A^2-17 A B-29 A+3 B^2+11 B+9),\\
q(A,B)&:= 8 (101 A^2-81 A B-121 A+16 B^2+49 B+36),\\
r(A,B)&:= 4(127 A^2-58 A B-196 A+3 B^2+52 B+72)
\intertext{and}
s(A,B)&:= 4  (163 A^3-160 A^2 B-329 A^2+50 A B^2+220 A B+219 A\notag \\ &\quad{}-5 B^3-35 B^2 -75 B-48).
\end{align*}
Since $-1 \leq B \leq 1<A$, we can easily see that
\begin{align*}
\dfrac{\partial{p(A,B)}}{\partial{A}}&= 4\big( 29(A-1) + 17(A-B)\big)  > 0, \\
\dfrac{\partial{q(A,B)}}{\partial{A}} &= 8\big( 121(A-1) + 81(A-B)\big)  > 0 \intertext{and}
\dfrac{\partial{r(A,B)}}{\partial{A}}&= 4\big( 196(A-1) + 58(A-B)\big)  >0.
\end{align*}
 Therefore, $p(A,B) > p(1,B) = 12(1-B)^2  \geq 0$; $ q(A,B) > q(1,B) = 128(1-B)^2  \geq 0$ and  $r(A,B) > r(1,B) = 12(1-B)^2 \geq 0$.
 Clearly,
 \begin{align*}
\dfrac{\partial{s(A,B)}}{\partial{A}}&= 4(489 A^2 -658 A - 320 A B + 219 + 220 B + 50 B^2)
 \intertext{and}
\dfrac{\partial^2{s(A,B)}}{\partial{A^2}}&= 4\big(658(A-1) + 320(A-B)\big)  > 0.
\end{align*}
Therefore, $\partial{s(A,B)}/\partial{A}$ is a strictly increasing function of $A$ and hence  $\partial{s(A,B)}/\partial{A}> 200(1-B)^2 \geq 0.$ Consequently, $s(A,B) > s(1,B)= 20(1-B)^3  \geq 0$.

Thus, for $n=2, \dotsc, 6$,  $\gamma_n$  are polynomials in $c_i$ $(i=1, 2, \dotsc, 5)$ with non-negative coefficients. Since $p_1 \in \mathcal{P}$,  $|c_i| \leq 2$ $(i=1, 2, \dotsc)$ and therefore, the maximum of  $|\gamma_n|$ would correspond to $|c_i|=2$. On simplification, we get  the desired estimates.  Define  a function $f_0: \mathbb{D} \to \mathbb{C}$ such that
$$f_0'(z)=\begin{cases}
(1-B z)^{(A-B)/B}, &\text{$B \neq 0$}\\
 e^{-A z}, &\text{$B=0$.}
\end{cases}$$
The result is sharp for the function $f_0$.
\end{proof}


%

\end{document}